\numberwithin{equation}{section}
\newtheorem{refthm}{Theorem}
\newcommand{\ra}{\rightarrow}
\newtheorem{theorem}{Theorem}
\newtheorem{lemma}{Lemma}
\newtheorem{corollary}{Corollary}
\newtheorem{definition}{Definition}
\newtheorem{proposition}{Proposition}
\newtheorem*{theorem*}{Theorem}
\newtheorem*{lemma*}{Lemma}
\newtheorem*{conj*}{Conjecture}
\newtheorem*{corollary*}{Corollary}
\newtheorem*{proposition*}{Proposition}
\newcommand{\rom}[1]{\uppercase\expandafter{\romannumeral #1\relax}}
\newcommand{\lan}{\langle}
\newcommand{\ran}{\rangle}
\newcommand{\Z}{\mathbb{Z}}
\newcommand{\R}{\mathbb{R}}
\newcommand{\C}{\mathbb{C}}
\newcommand{\N}{\mathbb{N}}
\newcommand{\cS}{\mathcal{S}}
\newcommand{\Ga}{\alpha}
\newcommand{\Ge}{\varepsilon}
\newcommand{\Gg}{\gamma}
\DeclareMathOperator*{\supp}{supp}
\newcommand{\labitem}[2]{%
\def\@itemlabel{\textbf{#1}}
\item
\def\@currentlabel{#1}\label{#2}}
\theoremstyle{remark}
\newtheorem*{remark}{Remark}
\title[On the kernel conditions of operators mapping atoms to molecules in $h^p$]{On the kernel conditions of operators mapping atoms to molecules in local Hardy spaces}
\author{Chun Ho Lau}
\address{Department of Mathematical Science, University of Cincinnati, Cincinnati, OH 45221-0025, USA}
\email{lauco@ucmail.uc.edu}
\author{Claudio Vasconcelos}
\address{Laboratoire de Math\'ematiques d'Orsay, CNRS UMR 8628, Universit\'e Paris-Saclay, B\^atiment 307, 91405 Orsay Cedex, France}
\email{claudio.vasconcelos@alumni.usp.br}
\subjclass[2020]{42B30, 42B35, 42B20}
\keywords{{Hardy spaces, approximate atoms and molecules, inhomogeneous Calder\'on-Zygmund operators}}
\thanks{}
\begin{document}

\begin{abstract}

In this paper, we explore the relationship between the operators mapping atoms to molecules in local Hardy spaces $h^p(\R^n)$ and the size conditions of its kernel. In particular, we show that if the kernel of a Calder\'on--Zygmund-type operator satisfies an integral-type size condition and a $T^*-$type cancellation, then the operator maps $h^p(\R^n)$ atoms to molecules. On the other hand, assuming that $T$ is an integral type operator bounded on $L^2(\R^n)$ that maps atoms to molecules in $h^p(\R^n)$, then the kernel of such operator satisfies the same integral-type size conditions. We also provide the $L^1(\R^n)$ to $L^{1,\infty}(\R^n)$ boundedness for such operators connecting our integral-type size conditions on the kernel with others presented in the literature.
\end{abstract}

\maketitle

\section{Introduction}

The real variable theory of Hardy spaces can be traced back to the work of Fefferman and Stein in \cite{FeffStein}, in which they provided different characterizations of the real Hardy space, denoted by $H^p(\R^n)$ for $p>0$, using maximal functions and Poisson integrals. These spaces coincide with Lebesgue spaces when $p>1$ and for some applications it provides a suitable substitute for $L^p(\R^n)$ when $0<p\leq 1$ since Hardy spaces have nontrivial dual characterizations. Furthermore, when $0<p\leq 1$, tempered distributions belonging to $H^p(\R^n)$ can be decomposed in terms of bounded compactly supported functions $a$ satisfying vanishing moment conditions, that is
\begin{equation} \label{vanishing-moment}
\int_{\R^n} a(x)x^{\alpha} dx = 0, \quad \forall \, |\alpha|\leq N_p := \lfloor n(1/p-1) \rfloor.
\end{equation}
This decomposition was established for $H^p(\R)$ by Coifman \cite{Coifman} and generalized for any dimension by Latter \cite{Latter}. It turned out to be very useful, since many properties of the space can be reduced by studying it over the atoms. For instance, it is well known that the boundedness of Calder\'on-Zygmund type operators can be established by proving that the image of the operator by an atom is uniformly bounded in the $H^p-$norm. Since atoms have themselves uniformly bounded norm in $H^p(\R^n)$, one may try to show that the operator maps atoms into atoms, which is not the case since it may not preserve the compact support. This led to the study of a more general decomposition of Hardy spaces, called molecular decomposition, in which the hypothesis of compact support is relaxed.

The first discussion on molecular decompositions of Hardy spaces, using functions that are no longer compactly supported but satisfy certain Lebesgue norm size conditions and vanishing moments, was established by Taibleson and Weiss \cite{TaibWeiss}. %, but the initial idea of molecules may trace back to the \textcolor{red}{paper \cite{} when studied}. 
This new decomposition were extensively used over the years to show the boundedness of certain classes of linear operators on $H^p(\R^n)$, since its boundedness follows by showing that the operator maps atoms into molecules.

Even though the classical Hardy spaces are useful, they still have some inconveniences. For instance, due to the cancellation conditions inherent in the Hardy space, they are not stable under multiplications by cutoff functions and the class of Schwartz functions $\mathcal{S}(\R^n)$ is not contained in $H^p(\R^n)$. Motivated by this, in \cite{Goldberg1979} Goldberg introduced a localized version of Hardy spaces, called local Hardy spaces and denoted by $h^p(\R^n)$, for $p>0$. As desired, $\mathcal{S}(\R^n)\subset h^p(\R^n)$ for all $0<p <\infty$ and $h^p(\R^n)$ is stable under multiplication by a smooth cutoff function. This localization property allows the extension of local Hardy spaces in different settings as manifolds. From a comparison between $H^p(\R^n)$ and $h^p(\R^n)$ (see \cite{Goldberg1979}*{Lemma 4}) one can get an analogous atomic decomposition for $h^p(\R^n)$, except that vanishing moment conditions \eqref{vanishing-moment} are required only for atoms supported in balls with small radii, which we call local vanishing moment condition. 

It turned out that this local vanishing moment condition is not necessarily required. For instance, in \cite{GaliaThesis}, Dafni introduced atoms in $h^p(\R^n)$, for $0<p\leq 1$, replacing the local vanishing moment condition by a control of the absolute value of their moments by the factor $r^{\beta}$, where $r$ is the radius of the ball in which the support of the atom is contained and $\beta>0$. Later, Komori \cite{Komori} showed that for $\frac{n}{n+1}<p<1$, one just needs to bound the absolute value of the moment by a constant; however, this is not the case when $p=\frac{n}{n+k}$ for some $k\in\N \cup \{ 0 \}$ (see the \cite[Example 3.4]{DLPV1}). For the particular case $p=1$, Dafni and Yue in \cite{DafniYue} showed an atomic decomposition for $h^1(\R^n)$ with atoms with moments being bounded above by $[\log(1+r^{-1})]^{-1}$. Later, motivated by the boundedness of inhomogeneous Calder\'on-Zygmund operators in $h^p(\R^n)$, the authors in \cite{DLPV1} extended these ideas and introduced approximate atoms and molecules for all $0<p\leq 1$. In particular, they showed an atomic decomposition with atoms satisfying the inhomogeneous cancellation condition
\begin{equation} \label{eqn:approx_cancel} 
     \displaystyle \left| \int_{B(x_B,r)}{a(x) (x-x_B)^{\alpha}dx} \right| \leq \left\{ \begin{array}{ll} C &\quad \text{if } |\alpha|<\gamma_p, \\ 
			& \\
			 \left[\log \left( 1+\dfrac{C}{r} \right)\right]^{-\frac{1}{p}} &\quad \text{if }|\alpha|=N_p = \gamma_p,
		\end{array} \right.
        \end{equation}
    where $C>0$ is a constant and $B(x_B,r)$ is the smallest ball containing the support of $a$.

To be more precise of operators we mentioned before, a Calder\'on--Zygmund singular integral operator is an $L^2$-bounded operator formally given by 
$$
Tf(x)=\int_{\R^n}K(x,y)f(y)dy, \quad \forall \, x\notin \supp(f)
$$ 
where the kernel $K$ is a continuous function away from the diagonal satisfying certain size conditions. It is well known that if $K$ satisfies the H\"ormander condition, \textit{i.e.}, 
$$
\sup_{y\in B}\int_{(2B)^c}|K(x,y)-K(x,c(B))|dx<\infty, \quad \text{for all balls $B$},
$$ 
then $T$ is bounded on $L^p(\R^n)$ for $1<p<2$, bounded from $L^1(\R^n)$ to $L^{1,\infty}(\R^n)$ and from $H^1(\R^n)$ to $L^1(\R^n)$, see \cite{SteinHA}*{Chapter I Section 5 and Chapter III Section 3.1}. However, H\"ormander condition is not enough to conclude that $T$ is bounded on $H^1(\R^n)$, even if $T^*(1)=0=T(1)$, as shown in \cite{YYD}. The boundedness of $T$, under the condition that $T^*(1)=0$, on $H^1(\R^n)$ is guaranteed considering kernels satisfying the classical H\"older regularity
\begin{align} \label{sizeA}
    |K(x,y)|\leq C|x-y|^{-n} \ \ \text{and} \ \ |K(x,y)-K(x,z)|\leq C \frac{|y-z|^{\delta}}{|x-y|^{n+\delta}} 
\end{align}
whenever $2|y-z|\leq |x-y|$, or some other weaker integral H\"ormander-type conditions presented in \cite{PV}. However, such operator $T$ associated with $K$ would not necessarily be bounded on $h^1(\R^n)$ nor from $h^1(\R^n)$ to $L^1(\R^n)$ (consider, for instance, the Hilbert transform on $\R$ and the characteristic function on $[0,1]$). An alternative for local Hardy spaces is to consider inhomogeneous Calder\'on-Zygmund singular integral operators, introduced by \cite{DHZ}. The main difference between this latter and the classical one is that the kernel of the former has a stronger decay far away from the diagonal, namely 
\begin{equation} \label{inhomogeneous-kernel}
    |K(x,y)|\leq C\min\lbrace|x-y|^{-n},|x-y|^{-n-\mu}\rbrace, \quad \forall \, x \neq y \text{ and some $\mu>0$}.
\end{equation}
Hence, the class of inhomogeneous  Calder\'on--Zygmund singular integral operators is smaller than the class of Calder\'on--Zygmund singular integral operators. This kind of operator includes some of the pseudo-differential operators, see \cite{DLPV1,DHZ}. Then, if $T$ is an inhomogeneous Calder\'on--Zygmund singular integral operator, the authors in \cite{DLPV1,DLPV2} found necessary and sufficient conditions for the boundedness of such operators on $h^p(\R^n)$ for all $0<p\leq 1$, improving and extending the work \cite{DHZ}.

The core of proving the boundedness of a Calder\'on--Zygmund singular integral operator $T$ on Hardy spaces or local Hardy spaces is to show that $T$ maps atoms to molecules. This strategy works perfectly in $H^p(\R^n)$, under the assumption that the operator satisfies $T^*(x^\alpha)=0$, i.e.,
$$
\int_{\R^n} Ta(x)x^{\alpha}dx=0, \quad \forall f\in L^2_{N_p}(\R^n) \text{ and } |\alpha|\leq N_p.
$$
This condition is actually necessary and sufficient for the boundedness of such operators in $H^p(\R^n)$. However, a local version of such cancellation condition on $T$ is too strong in the local setting (for instance, for pseudodifferential operators). The key to find less restrictive necessary and sufficient conditions for the boundedness of inhomogeneous Calder\'on-Zygmund operators in the local setting, was to consider approximate atoms and molecules. To do this, if $a$ is an atom in $h^p(\R^n)$, we show that only the conditions imposed on the kernel imply that $Ta$ satisfy the size conditions of the molecule. Then, the additional cancellation condition that $T^*((\cdot-c)^{\alpha})$ satisfy a local Campanato-type estimate will show the approximate moment condition of the molecule (see \cite{DLPV1}*{Section 5}). On the other hand, if we suppose that $Ta$ satisfies the size condition of molecules, then $T^*((\cdot-c)^{\alpha})$ must satisfy the local Campanato type conditions, see \cite{DLPV2}*{Theorem 2}. In a more general setting, the authors in \cite[Theorem 1]{DLPV2} also proved the necessity of this local Campanato condition for genereral linear and bounded operators in $h^p(\R^n)$ that maps atoms into \textit{pseudo-molecules}, a generalization of \textit{pre-molecules}, that will be defined in Definition \ref{def:molecules}. 

Motivated by these results, the goal of this paper is to further investigate this relation between the size conditions of the kernel and the property of the operator to map atoms to molecules. This work is inspired by a related question answered in the setting of the homogeneous Triebel-Lizorkin spaces $\dot{F}^{\alpha,q}_p(\R^n)$ for $\alpha\in (0,1)$ and $1<p,q<\infty$ in \cite{FHJW}, which is related to Hardy spaces since $H^p(\R^n) = \dot{F}^{0,2}_{p}(\R^n)$ and $h^p(\R^n) = {F}^{0,2}_{p}(\R^n)$ for all $0<p\leq 1$, where $F^{\alpha,q}_{p}(\R^n)$ denotes the inhomogeneous Triebel-Lizorkin spaces. In particular, the authors showed in Theorem 1.16 that a linear operator $T$ which is continuous from $C^{\infty}_c(\R^n)$ with mean 0 to the distribution spaces mapping certain smooth atoms to smooth molecules, then the Schwartz kernel of the restriction on $C^{\infty}_c(\R^n)$ of the extension of $T$ on $\dot{F}^{\alpha,q}_p(\R^n)$ indeed satisfies the classical Calder\'on-Zygmund estimates $|K(x,y)|\leq C|x-y|^{-n}$ and  $|K(x,y)-K(x,z)|\leq C|y-z|^{\delta}|x-y|^{-n-\delta}$ whenever $2|y-z|\leq |x-y|$. However, the necessary conditions on the kernel of an operator to map atoms to molecules for (local) Hardy spaces are unclear.

To motivate our main result, we start by showing that under a suitable integral-type condition on the kernel and some cancellation condition on $T$, then $T$ maps atoms to approximate molecules.

\begin{theorem} \label{mainthm0}
    Let $0<p\leq 1\leq q \leq 2$, $1\leq s\leq 2$  with $p\neq s$, and $T$ be a Calder\'on-Zygmund singular integral operator. If the kernel of $T$ satisfies
    \begin{itemize}
        \item[(i)] there exist a collection of polynomials $\lbrace P_{K,B}(x,y)\rbrace_{B}$  in $y$-variables (of degree at most $N_p$), a constant $\varepsilon>0$, and a constant $C>0$ such that for all balls $B:=B(c,r)$ we have
\begin{align} \label{Kernel1}
    \bigg\|\bigg(\frac{|x-c|}{r}\bigg)^{n(\frac{1}{p}-\frac{1}{s})+\frac{\varepsilon}{s}}\|K(x,y)-P_{K,B}(x,y)\|_{L^{q}(B,dy/|B|)}\bigg\|_{L^s([B(c,2r)]^c,dx)}\leq C r^{n(s^{-1}-1)},
\end{align}
where $P_{K,B}(x,y)\equiv 0$ if $r\geq 1$;
    \item[(ii)] for all $|\alpha|\leq \lfloor n(\frac{1}{p}-1)\rfloor$,  $c\in\R^n$ and $0<r<1$, $f=T^*((\cdot-c)^{\alpha})$, formally defined by $\langle f, a\rangle = \lan (x-c)^{\Ga}, T(a)\rangle$, satisfies 
\begin{align} \label{Tstar}
    \bigg(\fint_{B(c,r)}|f(x)-P^{N_p}_B(f)(x)|^qdx\bigg)^{\frac{1}{q}}\leq C \Psi_{p,\alpha}(r),
\end{align}
where $P^{N_p}_{B}(f)(x)$ is the polynomial of degree $\leq N_p$ with the same moments as $f$ over $B$ up to order $N_p$, and
$$
\displaystyle{\Psi_{p,\alpha}(t):=\left\{ \begin{array}{ll} 
t^{\gamma_p} &\quad \text{if } |\alpha|< \gamma_p, \\
 t^{\gamma_p} \left[\log \left( 1+\dfrac{C}{t} \right)\right]^{-\frac{1}{p}} &\quad \text{if } |\alpha| = \gamma_p = N_p \in \Z_{+};
			\end{array} \right.}
$$
\end{itemize}
then $T$ maps $(h^p,q')$ exact atoms supported in $B(c,r)$ to $(h^p,s,n(\frac{s}{p}-1)+\varepsilon, C')$ approximate molecules centered in $B(c,2r)$ for some $C'>0$. Thus, $T$ is bounded on $h^p(\R^n)$.
\end{theorem}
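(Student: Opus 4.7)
The plan is to fix an exact $(h^p,q')$ atom $a$ supported in a ball $B=B(c_0,r)$ and verify directly that $Ta$ satisfies the three defining properties of an $(h^p,s,n(s/p-1)+\varepsilon,C')$ approximate molecule: a local $L^s$-bound on $2B$, a weighted $L^s$-decay condition outside $2B$, and (only when $r<1$) approximate moment conditions up to order $N_p$. The structure of the proof is dictated by this list: condition (i) will handle the two size bounds, and condition (ii) together with the exact vanishing of the atom will handle the cancellation.

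For the local piece, I would exploit the $L^{q'}(\R^n)$-boundedness of $T$, which follows from its $L^2$-boundedness via interpolation (since $1\leq q\leq 2$), together with the atom normalization $\|a\|_{L^{q'}}\lesssim |B|^{1/q'-1/p}$ and H\"older's inequality on $2B$ to obtain a bound for $\|Ta\|_{L^s(2B)}$ with the correct power of $r$. For the weighted tail estimate, I would start, for $x\notin 2B$, from
$$
Ta(x)=\int_B K(x,y)\,a(y)\,dy = \int_B \bigl[K(x,y)-P_{K,B}(x,y)\bigr]\,a(y)\,dy,
$$
where the second identity (valid when $r<1$) uses that $P_{K,B}(x,\cdot)$ has degree at most $N_p$ in $y$ and $a$ has exact vanishing moments up to order $N_p$; when $r\geq 1$, one has $P_{K,B}\equiv 0$ and the identity is automatic. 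H\"older in $y$ with exponents $(q,q')$ gives
$$
|Ta(x)|\leq |B|^{1/q}\,\|K(x,\cdot)-P_{K,B}(x,\cdot)\|_{L^q(B,dy/|B|)}\,\|a\|_{L^{q'}}.
$$
Multiplying by the molecular weight $(|x-c_0|/r)^{n(1/p-1/s)+\varepsilon/s}$, taking the $L^s$-norm over $[B(c_0,2r)]^c$, and invoking condition (i) combined with the atom normalization gives the required tail estimate with decay parameter $n(s/p-1)+\varepsilon$.

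For the approximate moment conditions (when $r<1$), for $|\alpha|\leq N_p$, I would use the adjoint pairing
$$
\int_{\R^n} Ta(x)(x-c_0)^{\alpha}\,dx = \langle T^*((\cdot-c_0)^{\alpha}),a\rangle,
$$
interpreted via $\langle (x-c_0)^\alpha, Ta\rangle$ as in the statement of the theorem. Because $a$ has exact vanishing moments up to $N_p$, the pairing is unchanged if I subtract the polynomial $P^{N_p}_B(T^*((\cdot-c_0)^{\alpha}))$ from the first argument. H\"older's inequality and condition (ii) then give
$$
\left|\int_{\R^n} Ta(x)(x-c_0)^{\alpha}\,dx\right|\lesssim |B|\,\|a\|_{L^{q'}}\,\Psi_{p,\alpha}(r)\lesssim r^{n(1-1/p)}\,\Psi_{p,\alpha}(r),
$$
which is precisely the approximate cancellation in the molecule definition of \cite{DLPV1}. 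The $h^p$-boundedness of $T$ then follows because approximate molecules have uniformly bounded $h^p$-norm.

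The main obstacle I anticipate is the rigorous justification of the duality pairing $\langle T^*((\cdot-c_0)^{\alpha}),a\rangle$: since $(x-c_0)^{\alpha}$ is not in $L^2$, the pairing must be defined as $\int (x-c_0)^{\alpha}Ta(x)\,dx$, and absolute convergence of this integral relies precisely on the tail estimate proved in the previous step, so the two arguments must be carried out in the right order. In the same vein, one must check that $P^{N_p}_B(T^*((\cdot-c_0)^{\alpha}))$ is well defined directly from condition (ii) (a moderate-growth functional has a canonical polynomial projection of degree $\leq N_p$ on $B$) and that subtracting it inside the pairing is legitimate. Once these technical points are in place, the remainder of the argument is bookkeeping of the exponents $p,q,s,\varepsilon$ against the precise definition of approximate molecule.
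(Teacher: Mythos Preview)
Your proposal is correct and follows essentially the same route as the paper: verify $(M_1)$ via the $L^2$ (or $L^{q'}$) boundedness of $T$, verify $(M_2)$ by subtracting $P_{K,B}$ inside the kernel representation and applying H\"older in $y$ followed by condition (i), and verify $(M_3)$ via the duality pairing, subtraction of $P^{N_p}_B(T^*((\cdot-c_0)^\alpha))$, and condition (ii). The paper also isolates the absolute convergence of $\int |x-c|^{|\alpha|}|Ta(x)|\,dx$ in a separate preliminary proposition, exactly the technical point you flag as the main obstacle; your observation that this follows from the tail estimate already proved is precisely the paper's argument. One small wording issue: $L^{q'}$-boundedness of $T$ does not follow from $L^2$-boundedness ``via interpolation'' alone, but it does hold because $T$ is assumed to be a Calder\'on--Zygmund operator (or, alternatively, the paper simply uses $L^2$-boundedness together with $\|a\|_{L^2}\le |B|^{1/2-1/p}$, which follows from the $(h^p,q')$ atom normalization since $q'\ge 2$).
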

% \textcolor{red}{Here we don't need to change the ball for approximate molecules to $B(c,2r)$ as we have given the remark about changing $B$ to $2B$.}

% \textcolor{blue}{CV: I understand that from the molecular point of view, it doesn't matter where the molecule is centered. But since condition (1.5) is in $B(c,2r)^c$, it seems to me that it may be better to state as the referee pointed out. Otherwise we would need to give more details on the calculations when the molecule is centered in $kB$ with $k<2$. What do you think ?}
% \textcolor{red}{CH: Agree}
One can verify that a kernel $K$ satisfies \eqref{Kernel1} if $K$ satisfies \eqref{inhomogeneous-kernel}  and either \eqref{sizeA} or
$$
\bigg(\int_{2^{j}r\leq |x-c|<2^{j+1}r}|K(x,y)-K(x,c)|^sdx\bigg)^{1/s}\leq C_{n,s} (2^j)^{\frac{1}{s}-1-\delta}r^{\frac{1}{s}-1},
$$ 
for every $|y-c|\leq r$, introduced in \cite{DLPV1}, with $P_{K,B(z,r)}(x,y)=K(x,z)$ if $0<r<1$. In addition, the kernel condition given in \cite{PV}*{Equation (4.7)} and \cite{LThesis}*{Equation (35)} imply \eqref{Kernel1}. Condition \eqref{Kernel1} is inspired by a BMO-type kernel condition first introduced by Suzuki in \cite{Suzuki1}, and can be regarded as a local-Campanato-type kernel condition with weights in the $x$-variable. In fact, Suzuki showed that Calder\'on-Zygmund operators associated to such kernels are bounded from $H^1(\R^n)$ to $L^1(\R^n)$ but they are not necessarily bounded from $L^{1}(\R^n)$ to $L^{1,\infty}(\R^n)$ (see \cite{Suzuki1}*{Theorem 2}).

Next, we state the following converse of Theorem \ref{mainthm0}, which is the main theorem of this paper.

\begin{theorem} \label{mainthm}
    Let $0<p\leq 1\leq q\leq s\leq 2$ ($p\neq s$),  and $\varepsilon >0$. Suppose $Tf(x)= \int K(x,y)f(y)dy$ whenever $x\notin\supp(f)$ is a bounded linear operator on $L^2(\R^n)$. If there exists $C>0$ such that $T$ maps $(h^p, q')$ exact atoms to $(h^p,s,n(\frac{s}{p}-1)+\varepsilon, C)$ approximate molecules with respect to the same ball, then for all balls $B=B(c,r)$ the kernel $K$ satisfies \eqref{Kernel1},
where 
$$P_{K,B}(x,y)=P_{K,B(c,r)}(x,y)=\begin{dcases}
       \sum_{|\alpha|\leq N_p}\frac{1}{\alpha!}F_{\alpha,c,r}(x)(y-c)^{\alpha}, &\quad \text{if }0<r<1, \\
    0 &\quad \text{otherwise},
    \end{dcases}$$
and for some functions $F_{\alpha,c,r}(x)$ on $|x-c|\geq r$. If $N_p\neq \gamma_p$ is additionally assumed, then $F_{\alpha,c,r}\in h^p(\R^n)$ and can be written as $F_{\alpha,c,r}=\sum_{j=0}^{\infty} \lambda_j M_{j,c}$, where $\sum_{j=0}^{\infty}|\lambda_j|^{p}<\infty$ and $M_{j,c}$ are $h^p(\R^n)$ approximate molecules with respect to $B(c,2^{-j+1})$.
\end{theorem}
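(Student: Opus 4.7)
The plan is to construct $P_{K,B}$ and $F_{\alpha,c,r}$ from $T$ applied to test functions biorthogonal to monomials on $B$, derive \eqref{Kernel1} by combining $L^q/\mathcal{P}_{N_p}$-duality with the approximate-molecule decay, and under $N_p\neq\gamma_p$ realize each $F_{\alpha,c,r}$ as a convergent molecular series. Throughout write $B=B(c,r)$.

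Fix first $r<1$. Pick a family $\{\psi_{\alpha,B}\}_{|\alpha|\leq N_p}\subset L^{\infty}(B)$ biorthogonal to the monomials, $\int_B \psi_{\alpha,B}(y)(y-c)^{\beta}\,dy=\delta_{\alpha\beta}$ for $|\beta|\leq N_p$; such a family is obtained by inverting the finite-dimensional moment matrix of $\mathcal{P}_{N_p}$ over $B$. Set
\[
F_{\alpha,c,r}(x):=\alpha!\, T(\psi_{\alpha,B})(x)\quad \text{for } |x-c|\geq r,
\]
well-defined by the $L^2$-boundedness of $T$ and $\psi_{\alpha,B}\in L^2(B)$, and $P_{K,B}(x,y):=\sum_{|\alpha|\leq N_p}\frac{F_{\alpha,c,r}(x)}{\alpha!}(y-c)^{\alpha}$. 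Biorthogonality gives $\int_B (K(x,y)-P_{K,B}(x,y))\psi_{\beta,B}(y)\,dy=0$ for all $|\beta|\leq N_p$, so $\|K(x,\cdot)-P_{K,B}(x,\cdot)\|_{L^q(B,dy/|B|)}$ is comparable to $\operatorname{dist}_{L^q(B,dy/|B|)}(K(x,\cdot),\mathcal{P}_{N_p})$. By the standard duality between $L^q(B,dy/|B|)/\mathcal{P}_{N_p}$ and the space of $L^{q'}(B,dy/|B|)$-functions with moments up to $N_p$ vanishing,
\[
\|K(x,\cdot)-P_{K,B}(x,\cdot)\|_{L^q(B,dy/|B|)}\;\sim\;\sup_{b}\Bigl|\int_B K(x,y)\,b(y)\,\tfrac{dy}{|B|}\Bigr|\;=\;|B|^{1/p-1}\sup_{a}|Ta(x)|,
\]
the supremum ranging over $(h^p,q')$-exact atoms $a=|B|^{-1/p}b$ supported in $B$.

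To prove \eqref{Kernel1}, invoke the hypothesis: each such $Ta$ is an $(h^p,s,n(s/p-1)+\varepsilon,C)$-approximate molecule w.r.t.\ $B$, so its size condition yields $\|Ta\|_{L^s(A_j)}\lesssim 2^{-j(n(1/p-1/s)+\varepsilon/s)}|B|^{1/s-1/p}$ on each dyadic annulus $A_j:=2^{j+1}B\setminus 2^j B$. Balanced against the on-$A_j$ weight $(|x-c|/r)^{n(1/p-1/s)+\varepsilon/s}\sim 2^{j(n(1/p-1/s)+\varepsilon/s)}$, this leaves a $2^{-j\varepsilon/s}$-slack from the strict positivity of $\varepsilon$. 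The main technical step — and the principal obstacle — is to upgrade this per-atom operator-norm bound into $L^s((2B)^c, w\,dx)$-control of the pointwise supremum $\sup_a|Ta(\cdot)|$ that governs the kernel mixed-norm; this is handled by a careful combination of measurable selection of the optimal $b_x$ at each $x$, annular Minkowski-type estimation, and geometric summation against the $2^{-j\varepsilon/s}$-slack, all of which hinge on $\varepsilon>0$. The case $r\geq 1$ proceeds identically with $P_{K,B}\equiv 0$ and no polynomial correction.

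For the molecular decomposition of $F_{\alpha,c,r}$ when $N_p\neq\gamma_p$, note that $|\alpha|\leq N_p<\gamma_p$ means \eqref{eqn:approx_cancel} reduces to the plain bounded-moment condition (no log factor). Decompose $\psi_{\alpha,B}$ dyadically around $c$: $\psi_{\alpha,B}=\sum_{j\geq 0}\lambda_j\widetilde\psi_j$, with $\widetilde\psi_j$ supported in $B(c,2^{-j+1})$ (successive pieces essentially confined to dyadic shells around $c$) and, after normalization, an $(h^p,q')$-exact atom w.r.t.\ $B(c,2^{-j+1})$; a direct $L^{\infty}$-computation exploiting $N_p<\gamma_p$ yields $\sum_{j\geq 0}|\lambda_j|^p<\infty$. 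Applying $T$ term-by-term and invoking the atom-to-molecule hypothesis, $F_{\alpha,c,r}=\alpha!\sum_{j\geq 0}\lambda_j M_{j,c}$ with $M_{j,c}:=T(\widetilde\psi_j)$ an approximate molecule w.r.t.\ $B(c,2^{-j+1})$, so $F_{\alpha,c,r}\in h^p(\R^n)$.
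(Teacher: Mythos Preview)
Your duality identification $\|K(x,\cdot)-P_{K,B}(x,\cdot)\|_{L^q(B,dy/|B|)}\sim |B|^{1/p-1}\sup_{a}|Ta(x)|$ is fine, but the step you flag as ``the principal obstacle'' is a genuine gap, not a technicality. The hypothesis gives, for each \emph{fixed} atom $a$, the bound $\big\|\,|\cdot-c|^{n(1/p-1/s)+\varepsilon/s}\,Ta\,\big\|_{L^s((2B)^c)}\leq C r^{\varepsilon/s}$. What you need is an $L^s((2B)^c)$-bound on the \emph{pointwise supremum} $x\mapsto\sup_a|Ta(x)|$, and there is no general mechanism to pass an uncountable supremum inside an $L^s$-norm for $s<\infty$. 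Measurable selection produces a map $x\mapsto a_x$, but then $a_x$ varies with $x$ and the atom-to-molecule hypothesis, which concerns a single atom at a time, simply does not apply to $x\mapsto Ta_x(x)$. The $2^{-j\varepsilon/s}$ slack you cite is a per-atom estimate on annuli and does not survive the supremum either. The paper avoids this entirely: it writes $K=\sum_{j\geq 0}K_j$ via a Littlewood--Paley decomposition, with $K_j(x,y)=T(\psi_j(\cdot-y))(x)$ for an \emph{explicit countable} family of atoms $\psi_j(\cdot-y)$ (and their derivatives, and their Taylor remainders $A_j$). The molecule bounds are then applied to these specific atoms and summed in $j$; no supremum over atoms ever appears.

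Your construction of $F_{\alpha,c,r}$ and its molecular decomposition has a related problem. The biorthogonal functions $\psi_{\alpha,B}$ satisfy $\int_B\psi_{\alpha,B}(y)(y-c)^{\alpha}\,dy=1\neq 0$, so $\psi_{\alpha,B}$ is \emph{not} an exact atom, and restricting it to dyadic shells around $c$ will not produce functions with vanishing moments up to $N_p$. Without exact cancellation you cannot invoke the atom-to-molecule hypothesis on the pieces $\widetilde\psi_j$. In the paper, $F_{\alpha,c,r}(x)=\sum_j T\big(\partial^{\alpha}_y\psi_j(\cdot-y)\big|_{y=c}\big)(x)$, and the summands are images of exact atoms because $\partial^{\alpha}\psi_j$ inherits vanishing moments from $\psi_j$ via integration by parts; the $\ell^p$-summability of the coefficients then follows from $|\alpha|\leq N_p<\gamma_p$.
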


In Remark \ref{remark_end} we make several comments on the hypothesis of this theorem.
% We will discuss case $s\geq q$ at the end of the paper. 

As a corollary of the previous theorem, we can show together with \cite{Suzuki0}*{Theorem 1}, that if the operator maps atoms to molecules, then it can be extended continuously to an operator from $L^{1}(\R^n)$ to $L^{1,\infty}(\R^n)$. 

\begin{corollary} \label{p=1case}
    Let $\frac{n}{n+1}<p\leq 1\leq q\leq s \leq 2$ with $p\neq s$ and $\varepsilon>0$. Suppose $Tf(x)= \int K(x,y)f(y)dy$ whenever $x\notin\supp(f)$ is a bounded linear operator on $L^2(\R^n)$, and $T$ maps $(h^p, q')$ exact atoms to  $(h^p,s,n(\frac{s}{p}-1)+\varepsilon, C)$ approximate molecules with respect to the same ball $B(c,r)$ for some $C>0$, then $K$ satisfies \eqref{Kernel1} with $P_{K,B(c,r)}(x,y)=K(x,c)$ if $r< 1$ (and 0 when $r\geq 1$) and 
    \begin{equation} \label{suzuki-condition}
    \sup_{B}\int_{(4B)^c} \fint_{B} \big|K(x,y)-K(x,c)\big| dydx <\infty.
    \end{equation}
     Moreover, $T$ can be extended to a bounded linear operator from $L^1(\R^n)$ to $L^{1,\infty}(\R^n)$.
\end{corollary}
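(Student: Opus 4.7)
The plan is to invoke Theorem \ref{mainthm} to obtain \eqref{Kernel1}, specialize the degree-zero polynomial to $K(x,c)$, deduce Suzuki's condition \eqref{suzuki-condition} by a dyadic annular decomposition, and finally apply \cite{Suzuki0}*{Theorem 1} for the $L^1(\R^n)\to L^{1,\infty}(\R^n)$ extension. Since $p\in(n/(n+1),1]$ forces $N_p=\lfloor n(1/p-1)\rfloor=0$, the polynomial $P_{K,B(c,r)}(x,y)$ produced by Theorem \ref{mainthm} has degree $0$ in $y$ and reduces, for $r<1$, to a function $F_{0,c,r}(x)$ of $x$ alone. The first task is to identify $F_{0,c,r}(x)$ with $K(x,c)$. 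The key is that exact $(h^p,q')$ atoms in this range satisfy only the single cancellation $\int a=0$, so for any such atom $a$ supported in $B(c,r)$ we have the off-support identity
\begin{equation*}
Ta(x)=\int_{B(c,r)}\bigl[K(x,y)-K(x,c)\bigr]\,a(y)\,dy,\qquad x\notin B(c,r).
\end{equation*}
The molecule size estimate on $Ta$ then transfers, via duality against $L^{q'}(B(c,r),dy/|B|)$, into \eqref{Kernel1} with the specific choice $P_{K,B(c,r)}(x,y)=K(x,c)$.

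Next, for the passage from \eqref{Kernel1} to \eqref{suzuki-condition}, fix $B=B(c,r)$ with $r<1$. Jensen's inequality (using $q\geq 1$) gives
\begin{equation*}
\fint_B |K(x,y)-K(x,c)|\,dy\leq \|K(x,\cdot)-K(x,c)\|_{L^q(B,dy/|B|)}.
\end{equation*}
Decompose $(4B)^c=\bigsqcup_{j\geq 2}A_j$ with $A_j=B(c,2^{j+1}r)\setminus B(c,2^j r)$, so that $|x-c|\asymp 2^jr$ and $|A_j|\lesssim (2^jr)^n$ on $A_j$. Applying H\"older's inequality with exponents $s,s'$ and factoring out the lower bound $(|x-c|/r)^{n(1/p-1/s)+\varepsilon/s}\geq 2^{j(n(1/p-1/s)+\varepsilon/s)}$ from the weight in \eqref{Kernel1} yields
\begin{equation*}
\int_{A_j}\|K(x,\cdot)-K(x,c)\|_{L^q(B,dy/|B|)}\,dx\leq C\,2^{j(n-n/p-\varepsilon/s)},
\end{equation*}
after cancellation of all $r$-powers. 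Since $n-n/p\leq 0$ for $p\leq 1$ and $\varepsilon>0$, this geometric series is summable, giving a bound uniform in $B$. The case $r\geq 1$ is handled analogously using the $P\equiv 0$ branch of \eqref{Kernel1} and the $L^2$-boundedness of $T$ to absorb the $|K(x,c)|$ term appearing in the triangle estimate. This establishes \eqref{suzuki-condition}, and \cite{Suzuki0}*{Theorem 1} then furnishes the $L^1\to L^{1,\infty}$ extension.

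The main obstacle is the identification step in the first stage: Theorem \ref{mainthm} only guarantees the existence of some degree-zero function $F_{0,c,r}(x)$, and one must refine this to pin down the specific value $K(x,c)$. This relies crucially on the single-moment condition $\int a=0$ of exact atoms when $N_p=0$, which causes $K(x,c)$ to appear naturally in the off-support representation $Ta(x)=\int[K(x,y)-K(x,c)]a(y)\,dy$; the remaining dyadic exponent computation and the invocation of Suzuki's theorem are then routine.
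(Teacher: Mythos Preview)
Your proposal has a genuine gap precisely at the step you flag as the main obstacle. The duality you sketch does not single out the constant $K(x,c)$: testing the identity $Ta(x)=\int_B[K(x,y)-K(x,c)]\,a(y)\,dy$ against mean-zero $a\in L^{q'}(B)$ recovers only the quotient norm $\inf_{c_0}\|K(x,\cdot)-c_0\|_{L^q(B)}$, since any constant can be absorbed into the subtracted term without changing the pairing. Moreover, the molecule hypothesis is an $L^s$ estimate for each \emph{fixed} atom, and the supremum over atoms cannot be pulled inside the $L^s$-norm in $x$; the inequality goes the wrong way. The paper's identification is instead read off directly from the construction in the proof of Proposition \ref{submainthm}: for $N_p=0$ the polynomial built there is $P_{K_j,B}(x,y)=T(\psi_j(\cdot-c))(x)=K_j(x,c)$, and summing over $j$ via Lemma \ref{lemma:kernel-decomposition} gives $P_{K,B}(x,y)=K(x,c)$. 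No duality is involved.

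There is a second gap in the $r\geq 1$ case. The $L^2$-boundedness of $T$ does not control $\int_{(4B)^c}|K(x,c)|\,dx$: at best it yields $\|K_j(\cdot,c)\|_{L^2}\lesssim\|\psi_j\|_{L^2}$, and H\"older against the infinite-measure complement $(4B)^c$ fails. The paper instead applies the atom-to-molecule hypothesis to the specific atoms $\psi_j(\cdot-c)$ (estimate \eqref{Mole2}) to obtain a weighted $L^s$ bound on each $K_j(\cdot,c)$ over $(4B)^c$, and then sums using the geometric factor $(2^{-j}/r)^{n(s/p-1)+\varepsilon}$; so again the decomposition $K=\sum_j K_j$ is essential, not $L^2$-continuity. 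Your dyadic-annulus computation for the $r<1$ passage from \eqref{Kernel1} to \eqref{suzuki-condition} is fine (the paper does it in a single H\"older step against $|x-c|^{-s'[n(1/p-1/s)+\varepsilon/s]}$, which is equivalent), and the final appeal to \cite{Suzuki0}*{Theorem 1} is correct.
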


\begin{remark}
    We shall mention that Condition \eqref{suzuki-condition} is equivalent to H\"{o}rmander's condition  by \cite{Suzuki1}*{Theorem 1}.
\end{remark}

g\section{Background and notation}

Throughout this paper, we denote by $q'$ the H\"older conjugate of $q$, \textit{i.e.} $q^{-1}+(q')^{-1}=1$, and $B(c,r)$ the ball in $\R^n$ centered at $c\in\R^n$ with radius $r>0$. Also, the constants $C$, $C'$, etc., in the proofs may vary from line to line, and we use the subscript to highlight the dependence of the variables in the constants. By $\N$ we mean the positive integers.

We start with the formal maximal definition of the Hardy space $h^p(\R^n)$.

\begin{definition}
   Let $p\in (0,\infty)$. We say that the tempered distribution $f\in \mathcal{S}'(\R^n)$ belongs to the local Hardy space $h^p(\R^n)$ if there exists $\Phi\in \mathcal{S}(\R^n)$ satisfying $\int \Phi \neq 0$ such that 
$$\mathcal{M}_{\Phi}f(x) = \sup_{0<t<1} |\Phi_t\ast f(x)|\in L^p(\R^n),$$
 where $\Phi_t(x):= t^{-n}\Phi(xt^{-1})$.
\end{definition}
\noindent The functional $\|f\|_{h^p}= \big\|\mathcal{M}_{\Phi}f\big\|_{L^p}$ defines a quasi-norm in $h^p(\R^n)$ when $0<p<1$ and a norm otherwise. We may refer it as a norm for convenience. The associated distance $d(f,g)=\|f-g\|_{h^p}^p$ defines a metric on $h^p(\R^n)$. Next we provide the definition of $(h^p,q)$ atoms.
\begin{definition}
    Let $0<p\leq 1 \leq q\leq \infty$ with $p\neq q$. We say the function $a$ is an $(h^p,q)$ exact atom if there exists a ball $B=B(c,r)$ such that 
\begin{enumerate}[label=(\arabic{*}), ref=(\arabic{*})]
    \item $\supp(a)\subset B$,
    \item $\|a\|_{L^q(B)}\leq |B|^{\frac{1}{q}-\frac{1}{p}}$, and 
    \item $\displaystyle \int_{B} a(x)(x-c)^{\alpha}dx =0$ for all $|\alpha|\leq N_p$ if $0<r<1$, 
\end{enumerate}
where $N_p:=\lfloor n(p^{-1}-1)\rfloor$, the largest possible integer that is at most $\gamma_p:=n(p^{-1}-1)$. If Condition $(3)$ is replaced by \eqref{eqn:approx_cancel}, we call it an $(h^p,q)$ approximate atom.
\end{definition}

Meanwhile, the molecular theory for $h^p(\R^n)$  with non-exact cancellation is studied in \cite{Komori} for $\frac{n}{n+1}<p<1$ and $h^1(\R)$ in \cite{DafniLif}. In \cite{DLPV1}, the authors studied the approximated molecular theory for $h^p(\R^n)$ for all $p\leq 1$. We now give the definitions of approximate molecules.

\begin{definition} \label{def:molecules}
    Let $0<p\leq 1 \leq q< \infty$ with $p\neq q$ and $\lambda>n(\frac{q}{p}-1)$. We say that a function $M$ is an $(h^p,q,\lambda, C)$ approximate molecule if there exist a ball $B=B(c,r)$ and $C>0$ (independent of $M$) such that 
\begin{enumerate}[label=(\arabic{*}), ref=(\arabic{*})]
    \item[$(M_1)$] $\|M\|_{L^q(B)}\leq Cr^{n(\frac{1}{q}-\frac{1}{p})}$,
    \item[$(M_2)$] $\|M|\cdot-c|^{\frac{\lambda}{q}}\|_{L^q(B^c)}\leq C r^{\frac{\lambda}{q}+n(\frac{1}{q}-\frac{1}{p})}$, and 
    \item[$(M_3)$]  $ \displaystyle \left| \int_{\R^n}{M(x) (x-c)^{\alpha}dx} \right| \leq \left\{ \begin{array}{ll}  C, &\quad \text{if } |\alpha|<\gamma_p, \\ 
	& \\
	\left[\log \left( 1+\dfrac{1}{C r} \right)\right]^{-\frac{1}{p}}, &  \quad \text{if }|\alpha|=N_p = \gamma_p.
		\end{array} \right. $
\end{enumerate}

Following \cite{DLPV2}*{Definition 3}, when the function $M$ satisfies only the size conditions ($M_1$) and ($M_2$), we call it a pre-molecule.
\end{definition}
We shall note that it is not harmful to change $B$ in ($M_1$) and ($M_2$) by $kB$ for some $k>1$ (independent of $B$) or to $\R^n$. Moreover, the constant $\lambda$ can be written as $\lambda=n(\frac{q}{p}-1)+\varepsilon$ for some $\varepsilon>0$ and the upper bound of (2) becomes $Cr^\frac{\varepsilon}{q}$.

In this paper, we only use the decomposition of $h^p(\R^n)$ in terms of exact atoms and approximate molecules. More precisely, we will use the following.

\begin{refthm}[\cites{Goldberg1979,DLPV1}] \label{atomicdecompo}
    Let $0<p\leq1\leq q\leq \infty$ with $p<q$ and $\lambda >n(\frac{q}{p}-1)$. Then the following are equivalent. 
    \begin{enumerate}
        \item The distribution $f\in \mathcal{S}'(\R^n)$ is in $h^p(\R^n)$.
        \item There exist a sequence $\lbrace \lambda_{j}\rbrace_{j\in\N} \in \ell^p(\C)$ and a sequence of $(h^p,q)$ exact atoms $\lbrace a_{j}\rbrace_{j\in \N}$ such that $f=\sum_{j\in \N} \lambda_j a_j$ in $\mathcal{S}'(\R^n)$ and in $h^p(\R^n)$, with $$\|f\|_{h^p} \simeq \inf \bigg\{ \bigg(\sum_{j\in \N}|\lambda_j|^p\bigg)^{1/p} \bigg\},$$ where the infimum is taken over all such atomic representations.
        % \item There exist a sequence of $\lbrace \lambda_{j}\rbrace_{j\in\N}\in \ell^p(\C)$ and a sequence of $(h^p,q)$ approximate atoms $\lbrace a_{j}\rbrace_{j\in \N}$ such that $f=\sum_{j\in \N} \lambda_j a_j$ in $\mathcal{S}'(\R^n)$ and in $h^p(\R^n)$, and $\|f\|_{h^p} = \inf \left\{ \left(\sum_{j\in \N}|\lambda_j|^p\right)^{1/p} \right\}$, where the infimum is taken over all such atomic representations.
        % \item There exist a sequence of $\lbrace \lambda_{j}\rbrace_{j\in\N}\in \ell^p(\C)$ and a sequence of $(h^p,q, \lambda)$ exact molecules $\lbrace M_{j}\rbrace_{j\in \N}$ such that $f=\sum_{j\in \N} \lambda_j M_j$ in $\mathcal{S}'(\R^n)$ and in $h^p(\R^n)$, and $\|f\|_{h^p} = \inf \left\{ \left(\sum_{j\in \N}|\lambda_j|^p\right)^{1/p} \right\}$, where the infimum is taken over all such molecular representations.
        \item There exist a sequence of $\lbrace \lambda_{j}\rbrace_{j\in\N}\in \ell^p(\C)$ and a sequence of $(h^p,q, \lambda, C)$ approximate molecules $\lbrace M_{j}\rbrace_{j\in \N}$ such that $f=\sum_{j\in \N} \lambda_j M_j$ in $\mathcal{S}'(\R^n)$ and in $h^p(\R^n)$, and $\|f\|_{h^p} \simeq \inf \left\{ \left(\sum_{j\in \N}|\lambda_j|^p\right)^{1/p} \right\}$, where the infimum is taken over all such molecular representations.
    \end{enumerate}
\end{refthm}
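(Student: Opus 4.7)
The plan is to fix a ball $B=B(c,r)$, construct $P_{K,B}$ explicitly in the $y$-variable as an $L^2(B)$-projection, and then extract the weighted mixed-norm estimate \eqref{Kernel1} by dualizing the atom-to-molecule hypothesis against both inner ($L^q(B)$) and outer ($L^s$-weighted) test functions. I treat $0<r<1$ first; the case $r\ge 1$ runs in parallel with $P_{K,B}\equiv 0$ and $X_B:=L^{q'}(B)$ in place of the orthogonal subspace below.

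\textbf{Step 1 (definition of $P_{K,B}$).} Let $\{\tilde p_\alpha\}_{|\alpha|\le N_p}$ be the $L^2(B)$-dual basis to the monomials $(y-c)^\alpha$. Since $\tilde p_\alpha\chi_B\in L^2(\R^n)$ and $T$ is $L^2$-bounded, $T(\tilde p_\alpha\chi_B)\in L^2(\R^n)$ and admits on $\{|x-c|>r\}$ the integral representation $\int_B K(x,y)\tilde p_\alpha(y)\,dy$. Set $F_{\alpha,c,r}(x):=\alpha!\,T(\tilde p_\alpha\chi_B)(x)$ and
$$P_{K,B}(x,y):=\sum_{|\alpha|\le N_p}\frac{F_{\alpha,c,r}(x)}{\alpha!}(y-c)^\alpha,$$
which by the choice of dual basis is precisely $\Pi_B[K(x,\cdot)](y)$. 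Hence $K(x,\cdot)-P_{K,B}(x,\cdot)\perp\mathcal P_{N_p}$ on $B$ for a.e.\ $x\notin B$.

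\textbf{Step 2 (operator bound).} For $\psi\in X_B:=L^{q'}(B)\cap\mathcal P_{N_p}^\perp$ the orthogonality yields $T\psi(x)=\int_B [K(x,y)-P_{K,B}(x,y)]\psi(y)\,dy$ for $x\notin B$, and $|B|^{1/q'-1/p}\psi/\|\psi\|_{L^{q'}}$ is an $(h^p,q')$ exact atom. Applying condition $(M_2)$ to $T$ of this atom with $\lambda:=n(s/p-1)+\varepsilon$ and setting $w(x):=(|x-c|/r)^\lambda$, a direct computation of exponents gives
\begin{equation}\label{eqn:plan-op}
\|T\psi\|_{L^s((2B)^c,w\,dx)}\le C\,r^{n(1/s-1/q')}\|\psi\|_{L^{q'}(B)}.
\end{equation}

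\textbf{Step 3 (mixed-norm upgrade).} After absorbing the $dy/|B|$ normalization, \eqref{Kernel1} becomes $\|\|K-P_{K,B}\|_{L^q(B,dy)}\|_{L^s((2B)^c,w\,dx)}\le Cr^{n(1/s-1/q')+n/q}$. For each $x\in(2B)^c$, duality in $L^q(B)/\mathcal P_{N_p}$ supplies $\eta_x\in X_B$ with $\|\eta_x\|_{L^{q'}}\le 1$ and $\int_B[K(x,y)-P_{K,B}(x,y)]\eta_x(y)\,dy=\|K(x,\cdot)-P_{K,B}(x,\cdot)\|_{L^q(B)}$; by a standard measurable selection, $x\mapsto\eta_x$ may be taken jointly measurable. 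Dualizing the outer $L^s(w\,dx)$ against $g\in L^{s'}(w^{-s'/s})$ with $\|g\|\le 1$ then recasts the mixed norm as the Bochner pairing
$$\int_{(2B)^c}\int_B K(x,y)\,G(x,y)\,dy\,dx,\qquad G(x,y):=g(x)\eta_x(y),$$
where $G(x,\cdot)\in X_B$ and $\|G\|_{L^{s'}(w^{-s'/s};L^{q'}(B))}\le 1$. Approximating $G$ in this Bochner space by simple tensors $\sum_i\mathbf 1_{E_i}(x)\,g_i\,\psi_i(y)$ with $\{E_i\}$ a disjoint partition of $(2B)^c$ and $\psi_i\in X_B$ with $\|\psi_i\|_{L^{q'}}=1$, applying \eqref{eqn:plan-op} on each $E_i$, and invoking H\"older in $x$ with the conjugate pair $(s,s')$ matched to the weight $w$ yields the target bound; the disjointness of the $E_i$ together with the pointwise identity $\|G(x,\cdot)\|_{L^{q'}}=\sum_i\mathbf 1_{E_i}(x)|g_i|$ must be used to collapse the discrete sum into $\|G\|_{L^{s'}(w^{-s'/s};L^{q'})}$ without a loss in the number of pieces. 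Unwinding $|B|^{1/q}=r^{n/q}$ restores the precise right-hand side $Cr^{n(1/s-1)}$ of \eqref{Kernel1}.

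\textbf{Step 4 (upgrade $F_{\alpha,c,r}\in h^p$ when $N_p\ne\gamma_p$).} The moment condition $(M_3)$ is then free of logarithmic correction. Decompose $\tilde p_\alpha\chi_B$ via a dyadic partition of unity on the concentric balls $B(c,2^{-j+1})$ with polynomial corrections so that each piece $\tau_j$ becomes an $(h^p,q')$ exact atom supported in $B(c,2^{-j+1})$ and $\tilde p_\alpha\chi_B=\sum_j\lambda_j\tau_j$ with $\sum|\lambda_j|^p<\infty$; applying $T$ term by term and using the atom-to-molecule hypothesis produces approximate molecules $M_{j,c}:=T\tau_j$ w.r.t.\ $B(c,2^{-j+1})$, whence $F_{\alpha,c,r}=\alpha!\sum_j\lambda_j M_{j,c}$. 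Theorem \ref{atomicdecompo}, direction (3)$\Rightarrow$(1), places $F_{\alpha,c,r}\in h^p(\R^n)$.

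\textbf{Expected main obstacle.} The crux is Step 3: generically a mixed norm $L^s_x(L^q_y)$ of a kernel strictly exceeds the operator norm of the corresponding integral operator $L^{q'}_y\to L^s_x$, so the Bochner decomposition of $G$ must preserve the exact $L^{s'}(L^{q'})$-structure throughout the reduction to \eqref{eqn:plan-op}; matching the weight power $w^{1/s}$ with its dual $w^{-1/s}$ on each piece $E_i$ (so that the atomic character of each $\psi_i$ meets the appropriate slice of $G(x,\cdot)$) is the technical heart of the argument and is what forces the precise exponents on $r$ and on $|x-c|$ appearing in the statement.
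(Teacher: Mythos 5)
Your proposal does not address the statement at hand. The statement is Theorem \ref{atomicdecompo}: the equivalence, for $f\in\mathcal{S}'(\R^n)$, of membership in $h^p(\R^n)$, existence of an $\ell^p$-summable decomposition into $(h^p,q)$ exact atoms, and existence of an $\ell^p$-summable decomposition into $(h^p,q,\lambda,C)$ approximate molecules, with comparability of the norms. What you have written is instead a proof sketch of Theorem \ref{mainthm} (the kernel estimate \eqref{Kernel1} for operators mapping atoms to molecules): you construct $P_{K,B}$ as an $L^2(B)$-projection, dualize the atom-to-molecule hypothesis, and try to upgrade an operator bound to a mixed-norm kernel bound. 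None of the content needed for Theorem \ref{atomicdecompo} appears: no maximal-function estimate, no Calder\'on--Zygmund-type construction producing exact atoms from an arbitrary $f\in h^p$ (the implication $(1)\Rightarrow(2)$, obtained in \cite{Goldberg1979} by comparison with $H^p$, with moment conditions imposed only on small balls), and no uniform bound $\|M\|_{h^p}\lesssim 1$ for approximate molecules, which is the substance of $(3)\Rightarrow(1)$ proved in \cite{DLPV1} by estimating $\mathcal{M}_\Phi M$ in $L^p$ from $(M_1)$, $(M_2)$ and the approximate moment condition $(M_3)$, including the logarithmic factor in the critical case $|\alpha|=N_p=\gamma_p$. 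Worse, your Step 4 explicitly invokes direction $(3)\Rightarrow(1)$ of Theorem \ref{atomicdecompo}, so with respect to the statement you were asked to prove the argument is circular. In the paper this theorem is not reproved at all; it is quoted from \cite{Goldberg1979} and \cite{DLPV1} as background.

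For completeness: even read as a proof of Theorem \ref{mainthm}, the sketch has a genuine gap exactly where you flag it. A bound for the operator from $L^{q'}(B)\cap\mathcal{P}_{N_p}^{\perp}$ to the weighted $L^s$ space does not control the mixed norm $\bigl\|\|K-P_{K,B}\|_{L^q(B,dy/|B|)}\bigr\|_{L^s(w\,dx)}$ of its kernel; in your simple-tensor reduction the pieces $E_i$ each consume a full application of the operator bound, and the resulting sum cannot be collapsed without a loss proportional to the number of pieces. The paper never passes through an operator-norm bound: it decomposes $K=\sum_j K_j$ with $K_j(x,y)=T(\psi_j(\cdot-y))(x)$, where the translated $\psi_j$, their derivatives, and the Taylor remainders $A_j$ are explicit constant multiples of $(h^p,\infty)$ exact atoms (Lemmas \ref{lemma:kernel-decomposition} and \ref{atomlemma}), so the molecule size conditions $(M_1)$--$(M_2)$ give, for each fixed $y\in B$, a weighted $L^s_x$ bound on $K_j(x,y)-P_{K_j,B}(x,y)$, which is then averaged in $y$ over $B$ and summed in $j$ (Proposition \ref{submainthm}). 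That device, not duality, is what makes the mixed-norm estimate accessible.
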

\section{Proofs}

In this section we provide the proof of the theorems.

\subsection{Proof of Theorem \ref{mainthm0}}

We start by showing that the pairing $\langle T^*((\cdot-c)^{\Ga}, a\rangle$ is well defined for kernels satisfying conditions of Theorem  \ref{mainthm0}.

\begin{proposition}
    Let $0<p\leq 1$ and $1\leq s\leq 2$. Fix $T$ the operator given in Theorem \ref{mainthm0}. Let $g\in L^{q'}(\R^n)$ ($q'\geq 2$) that has support in $B=B(c,r)$ and $g$ such that $\int_{\R^n}(x-c)^{\Ga}g(x)dx =0$ for all $|\alpha|\leq N_p$. Then 
    $$
    \int_{\R^n}|x-c|^{\Ga}|T(g)(x)|dx <\infty \quad \text{for all $|\alpha|\leq N_p$.}
    $$ 
\end{proposition}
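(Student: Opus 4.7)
The plan is to split the integral $\int_{\R^n} |x-c|^{|\alpha|} |Tg(x)|\, dx$ into the inner piece over $B^{*} := B(c, 2r)$ and the outer piece over its complement, estimating each separately by different tools. (I read $|x-c|^{\alpha}$ in the statement as $|x-c|^{|\alpha|}$, which dominates $|(x-c)^{\alpha}|$.)

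For the inner piece, I would invoke the $L^2$-boundedness of $T$ (built into the definition of a Calder\'on--Zygmund operator). Since $q' \geq 2$ and $|B| < \infty$, H\"older's inequality gives $g \in L^{2}(B)$, hence $Tg \in L^{2}(\R^n)$. Combined with the uniform bound $|x-c|^{|\alpha|} \leq (2r)^{|\alpha|}$ on $B^*$ and Cauchy--Schwarz, this yields
$$\int_{B^*} |x-c|^{|\alpha|} |Tg(x)|\, dx \leq (2r)^{|\alpha|}\|Tg\|_{L^2} |B^*|^{1/2} < \infty.$$

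For the outer piece, $\supp(g) \subset B$ is disjoint from $\{x : |x-c| \geq 2r\}$, so the kernel representation $Tg(x) = \int_B K(x,y) g(y)\, dy$ applies. Exploiting the vanishing moments of $g$ against polynomials in $y$ of degree $\leq N_p$, I subtract $P_{K,B}(x, \cdot)$ to obtain
$$Tg(x) = \int_B [K(x,y) - P_{K,B}(x,y)] g(y)\, dy,$$
and H\"older in $y$ with exponents $q, q'$ yields $|Tg(x)| \leq \|g\|_{L^{q'}} |B|^{1/q} G(x)$, where $G(x) := \|K(x,\cdot) - P_{K,B}(x,\cdot)\|_{L^q(B,\, dy/|B|)}$. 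I would then dyadically decompose $(B^*)^c = \bigsqcup_{k\geq 1} A_k$ with $A_k := \{2^k r \leq |x-c| < 2^{k+1} r\}$. On $A_k$, the pointwise lower bound $(|x-c|/r)^{n(1/p-1/s)+\varepsilon/s} \geq 2^{k[n(1/p-1/s)+\varepsilon/s]}$ combined with \eqref{Kernel1} gives $\|G\|_{L^s(A_k)} \lesssim 2^{-k[n(1/p-1/s)+\varepsilon/s]} r^{n/s-n}$; then H\"older in $x$ with exponents $s, s'$ and $|A_k|^{1/s'} \approx (2^k r)^{n/s'}$ produces $\int_{A_k} G(x)\, dx \lesssim 2^{-k(\gamma_p + \varepsilon/s)}$. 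Multiplying by $\sup_{A_k}|x-c|^{|\alpha|} \lesssim (2^k r)^{|\alpha|}$ and summing over $k \geq 1$ yields a geometric series with ratio $2^{|\alpha| - \gamma_p - \varepsilon/s}$, which converges since $|\alpha| \leq N_p \leq \gamma_p < \gamma_p + \varepsilon/s$.

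The main obstacle is the exponent bookkeeping in the outer estimate: one must verify that the weighted $L^s$ kernel estimate, the $L^s$--$L^{s'}$ H\"older inequality in $x$, and the polynomial growth $|x-c|^{|\alpha|}$ combine to produce a summable geometric series. The decisive ingredients are that $p < s$ (which follows from $p \leq 1 \leq s$ with $p \neq s$) makes $n(1/p - 1/s) > 0$, providing genuine decay from the kernel weight, while the $\varepsilon/s$ slack in \eqref{Kernel1} secures convergence even in the borderline case $|\alpha| = N_p = \gamma_p$.
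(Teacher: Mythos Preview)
Your proposal is correct and follows essentially the same strategy as the paper: split into the inner piece on $2B$ (handled by $L^2$-boundedness and Cauchy--Schwarz) and the outer piece on $(2B)^c$ (handled by subtracting $P_{K,B}$ via the cancellation of $g$, then H\"older in $y$ with exponents $q,q'$, then the kernel estimate \eqref{Kernel1}). The only cosmetic difference is in the outer piece: the paper inserts the weight $|x-c|^{n(1/p-1/s)+\varepsilon/s}$ and applies a single global H\"older inequality in $x$ with exponents $s,s'$, checking directly that $\||\cdot-c|^{|\alpha|-n(1/p-1/s)-\varepsilon/s}\|_{L^{s'}((2B)^c)}<\infty$, whereas you reach the same conclusion via a dyadic annular decomposition and summation of the resulting geometric series.
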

\begin{proof}
    Note that from the $L^2-$boundedness of $T$ we have
    $$\int_{2B}|x-c|^{\alpha}|T(g)(x)|dx \leq C_n r^{\alpha+\frac{n}{2}}\|Tg\|_{L^2}\leq  C_n r^{\alpha+\frac{n}{q}}\|T\|_{L^2\to L^2}\|g\|_{L^{q'}}<\infty.$$
    It remains to estimate the integral on $(2B)^c$. Note that, using the cancellation condition of $g$, 
    \begin{align*}
        &\int_{(2B)^c} |x-c|^{\alpha} |T(g)(x)|dx \leq \int_{(2B)^c} |x-c|^{|\alpha|} \int_{B}|K(x,y)-P_{K,B}(x,y)||g(y)|dy dx \\
        &\leq |B|^{\frac{1}{q'}}\|g\|_{L^{q'}} \int_{(2B)^c} |x-c|^{\alpha} \|K(x,y)-P_{K,B}(x,y)\|_{L^q(B, dy/|B|)} dx \\
%        &\leq |B|^{\frac{1}{q'}}\|g\|_{L^{q'}} \int_{(2B)^c} |x-c|^{n(p^{-1}-s^{-1})+\frac{\varepsilon}{s}} |x-c|^{-n(p^{-1}-s^{-1})-\frac{\varepsilon}{s}+|\alpha|} \|K(x,y)-P_{K,B}(x,y)\|_{L^q(B, dy/|B|)} dx \\
        &\leq |B|^{\frac{1}{q'}}\|g\|_{L^{q'}}  \bigg\||x-c|^{n(\frac{1}{p}-\frac{1}{s})+\frac{\varepsilon}{s}}\|K(x,y)-P_{K,B}(x,y)\|_{L^{q}(B,\frac{dy}{|B|})}\bigg\|_{L^s((2B)^c,dx)} \\
        & \quad \quad \quad \quad\quad \times \| |\cdot-c|^{-n(p^{-1}-s^{-1})-\frac{\varepsilon}{s}+|\alpha|}\|_{L^{s'}((2B)^c)}.
    \end{align*}
    Note that $-n(s')(\frac{1}{p}-\frac{1}{s})-\frac{s'\varepsilon}{s}+s'|\alpha|+n=-ns'(\frac{1}{p}-1)-\frac{s'\varepsilon}{s}+s'|\alpha|<0$ if and only if $|\alpha|\leq n(p^{-1}-1)+\frac{\varepsilon}{s}$, which is the case because $|\alpha|\leq N_p$. Therefore, the above estimate is bounded by a function of $r$ and is therefore finite.
\end{proof}

Although the proof of Theorem \ref{mainthm0} is similar to the one in \cite{DLPV1}*{Theorem 5.3}, for the sake of completeness, we will give the proof here.

 \begin{proof}[Proof of Theorem \ref{mainthm0}]\ \\
Let $a$ be an $(h^p,q')$ exact atom supported in $B=B(c,r)$. We show that $T(a)$ is an $(h^p,s,n(\frac{s}{p}-1)+\varepsilon)$ approximate molecule centered in $2B$. 

By the $L^2$-boundedness of $T$, we have 
$$
\|Ta\|_{L^s(2B)}\leq \|Ta\|_{L^2(2B)}|2B|^{\frac{1}{s}-\frac{1}{2}}\leq C_{q}\|T\|_{L^2\rightarrow L^2} |B|^{\frac{1}{2}-\frac{1}{p}+\frac{1}{s}-\frac{1}{2}}=C_{T,q} |B|^{\frac1s-\frac1p}. 
$$

To show ($M_2$), we split $r$ into two cases. For $r< 1$, using the exact cancellation condition on $a$ and the hypothesis,
\begin{align*}
    &\|Ta|\cdot-c|^{n(\frac{1}{p}-\frac{1}{s})+\frac{\varepsilon}{s}}\|_{L^s(2B)^c}= \bigg\||x-c|^{n(\frac{1}{p}-\frac{1}{s})+\frac{\varepsilon}{s}} \int_{B} [K(x,y)-P_{K,B}(x,y)]a(y)dy \bigg\|_{L^s((2B)^c,dx)} \\
    &\quad\quad \leq \bigg\||x-c|^{n(\frac{1}{p}-\frac{1}{s})+\frac{\varepsilon}{s}} \int_{B} |K(x,y)-P_{K,B}(x,y)||a(y)|dy \bigg\|_{L^s((2B)^c,dx)}   \\
    &\quad\quad\leq  \bigg\||x-c|^{n(\frac{1}{p}-\frac{1}{s})+\frac{\varepsilon}{s}}  \|K(x,y)-P_{K,B}(x,y)\|_{L^q(B,dy/|B|)}|B|^{\frac{1}{q}+\frac{1}{q'}-\frac{1}{p}} \bigg\|_{L^s((2B)^c,dx)} \\
    &\quad\quad\leq C |B|^{1-\frac{1}{p}} r^{n(\frac{1}{p}-\frac{1}{s})+\frac{\varepsilon}{s}+n(s^{-1}-1)} = C_{n,p,q,s,T} \, r^{\frac{\varepsilon}{s}}.
\end{align*}
When $r\geq 1$, the argument is exactly the same except $P_{K,B}(x,y)=0$ in this case.

Equation \eqref{Tstar} implies that $Ta$ satisfies ($M_3$) using H\"older's inequality and the fact that $$\int (x-c)^{\alpha}Ta(x)dx = \int T^*[(\cdot-c)^{\alpha}](y) a(y)dy. $$
 \end{proof}

\subsection{Proof of Theorem \ref{mainthm}}
The first main step in the proof of Theorem \ref{mainthm} is to describe a suitable decomposition of the kernel of $T$, following the analogous ideas presented in \cite{FHJW}*{Theorem 1.16} and \cite{SteinHA}*{pp. 244}. 

\begin{lemma} \label{lemma:kernel-decomposition}
There exists a sequence $\{ \psi_j \}_{j} \subset C^{\infty}_c(\R^n) $ of radial functions supported in $B(0,1)$ such that $\int \psi_0 = 1$, $\int \psi_j = 0$ for all $j\geq 1$ and
\begin{equation} \label{eq:approx}
\sum_{j=0}^{\infty}\widehat{\psi_j}(\xi)=1.
\end{equation} 
Moreover, if $K_j$ denotes the kernel of $T(\cdot \ast \psi_j)$, then $K_j(x,y)= T(\psi_j(\cdot-y))(x)$ and the kernel of the operator $T|_{C^{\infty}_c(\R^n)}$ is given by $K(x,y) = \sum_{j=0}^{\infty}K_j(x,y)$ in $\mathcal{S}'(\R^n\times \R^n).$
\end{lemma}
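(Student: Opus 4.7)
The plan is to construct the $\psi_j$ via a telescoping mollifier decomposition. Fix a radial $\phi\in C^\infty_c(\R^n)$ supported in $B(0,1)$ with $\int \phi=1$, and for $j\geq 0$ set $\phi_j(x):=2^{jn}\phi(2^j x)$. Define $\psi_0:=\phi_0$ and $\psi_j:=\phi_j-\phi_{j-1}$ for $j\geq 1$. Each $\psi_j$ is a radial element of $C^\infty_c(\R^n)$; since $\phi_j$ is supported in $B(0,2^{-j})$, the function $\psi_j$ is supported in $B(0,2^{1-j})\subset B(0,1)$. Also $\int\psi_0=1$ and, telescopically, $\int\psi_j=\int\phi_j-\int\phi_{j-1}=0$ for $j\geq 1$. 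The partial sum $\sum_{j=0}^N\psi_j=\phi_N$ gives
\[
\sum_{j=0}^N\widehat{\psi_j}(\xi)=\widehat{\phi_N}(\xi)=\widehat{\phi}(2^{-N}\xi),
\]
which converges to $\widehat{\phi}(0)=1$ pointwise as $N\to\infty$ by continuity of $\widehat{\phi}$, establishing \eqref{eq:approx}.

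For the kernel formula, note that $\psi_j(\cdot-y)\in L^2(\R^n)$ for every $y\in\R^n$, so $T(\psi_j(\cdot-y))(x)$ defines an element of $L^2(dx)$ thanks to the $L^2$-boundedness of $T$; call this function $K_j(x,y)$. For $f\in C^\infty_c(\R^n)$, I would write the convolution $f\ast\psi_j(\cdot)=\int f(y)\psi_j(\cdot-y)\,dy$ as a Bochner integral with values in $L^2(\R^n)$ and use the continuity of $T$ on $L^2$ to interchange $T$ with the integral:
\[
T(f\ast\psi_j)(x)=\int_{\R^n}T(\psi_j(\cdot-y))(x)\,f(y)\,dy=\int_{\R^n}K_j(x,y)f(y)\,dy,
\]
identifying $K_j$ as the Schwartz kernel of $S_j:=T(\cdot\ast\psi_j)$.

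To prove $K=\sum_{j=0}^\infty K_j$ in $\mathcal{S}'(\R^n\times\R^n)$, set $T_N:=\sum_{j=0}^N S_j$, whose kernel is the partial sum $\sum_{j=0}^N K_j$. The telescoping gives $T_Nf=T(f\ast\phi_N)$, and standard mollifier theory yields $f\ast\phi_N\to f$ in $L^2$ for every $f\in L^2(\R^n)$, so the $L^2$-boundedness of $T$ implies $T_Nf\to Tf$ in $L^2$. Hence, for $f,g\in\mathcal{S}(\R^n)$ and writing $(g\otimes f)(x,y)=g(x)f(y)$,
\[
\Big\langle \sum_{j=0}^N K_j,\, g\otimes f\Big\rangle=\langle T_Nf,g\rangle\longrightarrow \langle Tf,g\rangle=\langle K,\, g\otimes f\rangle.
\]
The operators $T_N$ are uniformly bounded on $L^2$ by $\|T\|_{L^2\to L^2}\|\phi\|_{L^1}$ (since $\|\phi_N\|_{L^1}=\|\phi\|_{L^1}$), so Banach--Steinhaus together with the density of tensor products in $\mathcal{S}(\R^n\times\R^n)$ lets one extend this pairwise convergence to all Schwartz test functions on $\R^{2n}$, yielding the claimed distributional identity.

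The main obstacle I anticipate is precisely this last step: convergence on tensor products is immediate, but upgrading it to convergence in $\mathcal{S}'(\R^n\times\R^n)$ requires the uniform operator bound combined with the (standard but slightly technical) sequential density of $\mathcal{S}(\R^n)\otimes\mathcal{S}(\R^n)$ in $\mathcal{S}(\R^n\times\R^n)$; the rest of the argument consists of routine telescoping and Bochner-integral manipulations.
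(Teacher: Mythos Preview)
Your argument is correct for the lemma as stated, and the kernel part (Bochner integral plus density of tensor products, with the uniform $L^2$ bound $\|T_N\|\le \|T\|\,\|\phi\|_{L^1}$ doing the work) is essentially the same as the paper's, only spelled out more carefully.

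The real difference is in the construction of $\psi_0$. You take $\psi_0=\phi$ to be any radial mollifier and telescope; the paper instead sets
\[
\psi_0(x)=c_0\Phi(x)-\sum_{1\le|\alpha|\le N_p}c_\alpha x^\alpha\Phi(x)
\]
with the constants chosen so that $\int x^\alpha\psi_0=0$ for all $1\le|\alpha|\le N_p$ (so in fact the paper's $\psi_0$ is \emph{not} radial, despite the lemma's wording). This extra moment cancellation is not recorded in the lemma statement but is exactly what the paper needs in the subsequent Lemma~\ref{atomlemma}: there one must show that suitable normalizations of $\psi_j$, $\partial^\beta\psi_j$, and the Taylor remainder $A_j$ are $(h^p,\infty)$ exact atoms, which requires $\int x^\alpha\psi_j=0$ for all $|\alpha|\le N_p$. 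With your construction one computes $\int x^\alpha\psi_j=(2^{-j|\alpha|}-2^{-(j-1)|\alpha|})\int x^\alpha\phi$, and for a nonnegative radial $\phi$ this is nonzero whenever $\alpha$ has all even entries. Thus your simpler $\psi_0$ proves the present lemma and feeds into the rest of the paper when $N_p\le 1$ (i.e.\ $p>\tfrac{n}{n+2}$, using radial symmetry for the first moments), but for smaller $p$ the paper's more elaborate choice is what makes Proposition~\ref{submainthm} go through.
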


\begin{proof}
We start by constructing the functions $\{\psi_j\}_j$. Let $\Phi$ be a non-negative function in $C^{\infty}_c(\R^n)$ with $\supp(\Phi)\subset B(0,{1}/{2})$, $\int \Phi =1$ and $\int \Phi(x)x^{\alpha}dx \neq 0$ for $1\leq |\alpha| \leq 2N_p$. Define 
$$
\psi_0(x)=c_0\Phi(x)-\sum_{1\leq |\alpha|\leq N_p} c_{\alpha}x^{\alpha}\Phi(x),
$$ 
for some suitable constants $c_{\alpha}\in \R$ such that $\int x^{\alpha}\psi_0(x)dx=0$ for all $1\leq |\alpha|\leq N_p$ and $\int \psi_0(x)dx=1$. Then, define $\psi(x)=2^n\psi_0(2x)-\psi_0(x)$ and let $\psi_j(x)=2^{(j-1)n}\psi(2^{j-1}x)$ for any $j\in\N$. By the translation property of the Fourier transform we get $\widehat{\psi_j}(\xi)=\widehat{\psi_0}(2^{-j}\xi)-\widehat{\psi_0}(2^{1-j}\xi)$. In this way, 
\begin{align*}
    \sum_{j=0}^{N}\widehat{\psi_j}(\xi) =\widehat{\psi_0}(2^{-N}\xi)\rightarrow \widehat{\psi_0}(0)=1 \ \ \text{as $N\rightarrow \infty.$}
\end{align*}
For $f\in C^{\infty}_c(\R^n)$ we have $f = \sum_{j=0}^{\infty} f\ast \psi_j$ in $\mathcal{S}(\R^n)$, and from \eqref{eq:approx} and Plancherel identity it also holds in $L^2(\R^n)$. Moreover,
\begin{equation} \label{eq:split-T}
T(f)= \sum_{j=0}^{\infty}T(f\ast \psi_j) \quad \text{in $\mathcal{S}'(\R^n)$ and $L^2(\R^n)$.}
\end{equation}  
If $K_j$ denotes the kernel of $T(\cdot \ast \psi_j)$, simply by the integral representation of $T$ we have that $K_j(x,y)= T(\psi_j(\cdot-y))(x)$ and from \eqref{eq:split-T}, together with the $L^2$ continuity of $T$, the kernel of $T$ is given by $K = \sum_{j=0}^{\infty}K_j$ in $\mathcal{S}'(\R^n \times \R^n)$. Since $K$ is the kernel of $T$ and such decomposition is true in $\mathcal{S}'$, we have from \eqref{eq:split-T} 
\begin{align*}
\lan\lan K-\sum_{j=1}^{N}K_j, \phi\otimes f\ran\ran &=  \lan  T(f)-\sum_{j=1}^{N}T_j(f),\phi\ran\ra 0,
\end{align*}
where $\langle\langle\cdot,\cdot\rangle\rangle$ denotes the paring between $\mathcal{S}'(\R^{n}\times \R^n)$ and $\mathcal{S}(\R^{n}\times \R^n)$ and $\langle\cdot,\cdot\rangle$ denotes the paring between $\mathcal{S}'(\R^{n})$ and $\mathcal{S}(\R^{n})$. Since the tensor product is dense in $\mathcal{S}'(\R^n\times \R^n)$, we have the convergence in $\cS'(\R^n\times \R^n)$.
\end{proof}

Since we want to establish estimates for $K_j(x,y)$, in which the construction was described in the previous lemma, the next step is to provide additional information on the family $\{\psi_j\}_j$, which is useful in Proposition \ref{submainthm}.

\begin{lemma} \label{atomlemma}
    Let $0<p\leq1$ and $\{\psi_j\}_j \subset C^{\infty}_c(\R^n) $ the family of functions constructed in the proof of Lemma \ref{lemma:kernel-decomposition}. Then the following are true:
    \begin{enumerate}
        \item The function $x \mapsto \displaystyle \frac{\psi_0(x)}{\|\psi_0\|_{L^{\infty}} \, |B(0,2)|^{\frac1p}}$ is an $(h^p,\infty)$ exact atom supported in $B(0,2)$;
        %\item The function $x \mapsto \displaystyle \frac{\psi_j(x)}{2^{(j-1)n}(2^n+1) \, \|\psi_0\|_{\mathcal{S}} \, |B(0,2^{-j+1})|^{\frac{1}{p}}}$ is an $(h^p,\infty)$ exact atom supported in $B(0,2^{-j+1})$ for all $j\in \N$; and
        \item For $N\in \N\cup\{0\}$, the function $x \mapsto \displaystyle \frac{\partial^{\beta} \psi_j(x)}{2^{(j-1)(|\beta|+n)} \, (2^n+1) \, \|\psi_0\|_{C^N} \, |B(0,2^{-j+1})|^{\frac{1}{p}}}$ is an $(h^p,\infty)$ exact atom supported in $B(0,2^{-j+1})$ for all $j\in \N$ and all multi-indices $\beta$ with $|\beta|\leq N$, where 
        $$\|\psi\|_{C^N}:=\sup_{x\in\R^n} \, \max_{|\beta|\leq N}|\partial^{\beta}\psi(x)|.$$
        \item If $|y-c|\leq \frac{2^{-j+1}}{3}$, then 
        \begin{align}\label{Ax}
            A_j(x) := \psi_j(x-y)-\psi_j(x-c)-\sum_{1\leq |\alpha|\leq N_p}\frac{(-1)^{|\alpha|}}{\alpha!} \left(\partial^{\alpha}\psi_j(x-c)\right)(y-c)^{\alpha},
        \end{align}
        %         \begin{align}\label{Ax}
        %     A_j(x) := \psi_j(x-y)-\psi_j(x-c)-\sum_{1\leq |\alpha|\leq N_p}\frac{1}{\alpha!} \left(\partial_y^{\alpha}\psi_j(\cdot-y)\right)|_{y=c}(x)(y-c)^{\alpha},
        % \end{align}
        is also a constant multiple (with constant $c_{p,n} \, 2^{j(N_p+1-\gamma_p)}|y-c|^{N_p+1}$) of $(h^p,\infty)$ atoms supported in $B(c, 2^{-j+2}).$
    \end{enumerate}
\end{lemma}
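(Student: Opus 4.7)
The plan is to verify, for each of the four listed functions, the three defining properties of an $(h^p,\infty)$ exact atom with respect to the stated ball: the support condition, the $L^\infty$-size bound, and (when the containing ball has radius $<1$) the vanishing moment condition up to order $N_p$. Parts (1)--(3) will reduce to direct computations from the explicit formulae $\psi(x) = 2^n\psi_0(2x) - \psi_0(x)$ and $\psi_j(x) = 2^{(j-1)n}\psi(2^{j-1}x)$, combined with the cancellation of $\psi_0$ built in by construction, while part (4) will use the integral form of Taylor's remainder.

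For part (1), the support of $\psi_0$ already lies in $B(0,1) \subset B(0,2)$, the bound $\|\psi_0\|_\infty \leq \|\psi_0\|_{\mathcal{S}}$ is immediate, and no cancellation is required since the radius is $\geq 1$. For part (2), the support of $\psi_j$ is $B(0,2^{-j+1})$ because $\psi_0(2^{j-1}\cdot)$ is, and the factor $2^n+1$ absorbs both summands of $\psi$ after the triangle inequality. For the moments, the change of variables $z=2^{j-1}x$ followed by $w=2z$ gives, for $j\geq 1$,
\begin{equation*}
\int \psi_j(x) x^\alpha\,dx = 2^{-(j-1)|\alpha|}(2^{-|\alpha|} - 1)\int \psi_0(z) z^\alpha\,dz,
\end{equation*}
which vanishes for $1 \leq |\alpha| \leq N_p$ by the defining property of $\psi_0$, and for $\alpha = 0$ because $2^{-|\alpha|}-1 = 0$. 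For part (3), the identity $\partial^\beta \psi_j(x) = 2^{(j-1)(n+|\beta|)}(\partial^\beta\psi)(2^{j-1}x)$ gives the support and $L^\infty$ bound (again by triangle inequality), and for the moments I integrate by parts to move $\partial^\beta$ onto $x^\alpha$, reducing to $\int \psi_j \cdot (\text{polynomial of degree} \leq N_p - |\beta|)$, which vanishes by part (2).

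For part (4), I observe that $A_j(x)$ is precisely the order-$N_p$ Taylor remainder of $y\mapsto\psi_j(x-y)$ at $y=c$; using $\partial_y^\alpha \psi_j(x-y) = (-1)^{|\alpha|}(\partial^\alpha\psi_j)(x-y)$, the integral remainder form yields
\begin{equation*}
A_j(x) = (-1)^{N_p+1}\sum_{|\alpha| = N_p+1}\frac{N_p+1}{\alpha!}(y-c)^\alpha\int_0^1 (1-t)^{N_p}(\partial^\alpha\psi_j)(x - c - t(y-c))\,dt.
\end{equation*}
Using the bound $\|\partial^\alpha\psi_j\|_\infty \leq C\,2^{(j-1)(n+N_p+1)}\|\psi_0\|_{\mathcal{S}}$ from part (3) produces $\|A_j\|_\infty \leq C'|y-c|^{N_p+1}\,2^{j(n+N_p+1)}\|\psi_0\|_{\mathcal{S}}$. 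Combining with $|B(c,2^{-j+2})|^{-1/p} \simeq 2^{jn/p}$ and the identity $n/p - \gamma_p = n$, this matches the target normalization $c_{p,n,\theta}\,2^{j(N_p+1-\gamma_p)}|y-c|^{N_p+1}|B(c,2^{-j+2})|^{-1/p}$. The support condition follows because $|y-c|\leq 2^{-j+1}/3$ together with $\supp(\psi_j) \subset B(0,2^{-j+1})$ forces every term of $A_j$ to vanish outside $B(c,2^{-j+2})$. For the cancellation up to order $N_p$, I compute $\int A_j(x)(x-c)^\beta\,dx$ term by term: in the $\psi_j(x-y)$ contribution, the change of variable $u=x-y$ expands $(u+(y-c))^\beta$ into a finite sum of monomials in $u$ of degree $\leq N_p$, each integrating against $\psi_j$ to zero by part (2); the $\psi_j(x-c)$ and $(\partial^\alpha\psi_j)(x-c)$ contributions similarly reduce, after a translation and integration by parts for the derivative terms, to moments of $\psi_j$ of order $\leq N_p$, which vanish.

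The main obstacle I expect is purely bookkeeping in part (4): tracking the interplay among the factor $2^{(j-1)(n+N_p+1)}$ from differentiating a dilated $\psi_j$, the factor $|y-c|^{N_p+1}$ from the Taylor remainder, and the factor $|B(c,2^{-j+2})|^{-1/p}\simeq 2^{jn/p}$ from the atom normalization, and verifying that these combine cleanly to yield the stated exponent $N_p+1-\gamma_p$ via $\gamma_p = n/p - n$. No deeper analytical tool beyond elementary Taylor expansion and integration by parts is needed.
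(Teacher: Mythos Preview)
Your proposal is correct and follows essentially the same approach as the paper: direct verification of support, $L^\infty$ size, and cancellation for each item, with parts (2) and (3) handled by dilation/change of variables plus integration by parts, and part (4) via Taylor expansion. The only cosmetic difference is that in part (4) you use the integral remainder form of Taylor's theorem, whereas the paper uses the Lagrange (mean-value) form to obtain the same bound $\|A_j\|_{L^\infty}\leq c_{p,n}2^{(j-1)(N_p+1+n)}\|\psi_0\|_{\mathcal S}|y-c|^{N_p+1}$; your treatment of the support and cancellation of $A_j$ is in fact slightly more explicit than the paper's one-line justification.
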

\begin{proof} 
The proof of (1) is immediate. We show the others.
\begin{itemize}
\item[(2)]By construction, $\supp(\psi)\subset B(0,1)$ and $\supp(\partial^{\beta}\psi_j)\subset B(0, 2^{-j+1})$. A direct calculation shows that 
\begin{align*}
    &\bigg\|\frac{\partial^{\beta}\psi_j}{2^{(j-1)(|\beta|+n)}(2^n+1)\|\psi_0\|_{C^N}|B(0,2^{-j+1})|^{\frac{1}{p}}}\bigg\|_{L^{\infty}} \\
    &\quad \quad\quad \quad \quad\quad \quad \quad\quad \quad \quad\quad \leq \frac{2^{(j-1)(|\beta|+n)}(2^n+1)\|\psi_0\|_{C^N}}{2^{(j-1)(|\beta|+n)}(2^n+1)\|\psi_0\|_{C^N}|B(0,2^{-j+1})|^{\frac{1}{p}}}\\
    &\quad \quad\quad \quad \quad\quad \quad \quad\quad \quad \quad\quad = |B(0,2^{-j+1})|^{-\frac{1}{p}}
\end{align*}
for any $j\in \N$ and $|\beta|\leq N$. To verify the cancellation condition, it suffices to check it for $\psi_j$. Suppose first $|\beta|=0$, and note that 
\begin{align*}
    \int_{\R^n} 2^{(j-1)n } \psi_j(x)dx = \int_{\R^n} \psi(x)dx = \int_{\R^n}[2^n\psi_0(2x)-\psi_0(x)] dx =0.
\end{align*}
For $1\leq |\alpha|\leq N_p$, we have 
\begin{align*}
    \int_{\R^n} x^{\alpha} 2^{(j-1)n}\psi_j(x)dx &= 2^{-(j-1)|\alpha|}   \int_{\R^n} x^{\alpha}\psi(x)dx \\
    &= 2^{-(j-1)|\alpha|} \int_{\R^n} (2^{-n|\alpha|}-1) x^{\alpha}\psi_0(x)dx \\
    &=0.
\end{align*}
For $1\leq |\beta|\leq N$, the cancellation conditions hold because 
\begin{align*}
    \int_{\R^n} x^{\alpha} \partial^{\beta}\psi_j(x)dx &= (-1)^{|\beta|}\int_{\R^n} \partial^{\beta}(x^{\alpha})\psi_j(x)dx =0 
\end{align*}
and $ \partial^{\beta}(x^{\alpha})$ is again a monomial with non-negative power.

\item[(3)] Let $0<|y-c|\leq \frac{2^{-j+1}}{3}$.  
Note that $\supp(A_j)\subset B(y,2^{-j+1})\cup B(c,2^{-j+1})\subset B(c,2^{-j+2})$. By Taylor's theorem, there exists $v=ty+(1-t)c$ with $t\in [0,1]$ such that 
\begin{align*}
    A_j(x) = \sum_{|\alpha|=N_p+1} \frac{(-1)^{|\alpha|}}{\alpha!} \big(\partial^{\alpha}[\psi_j(x-v)]\big) (y-c)^{\alpha}.
\end{align*}
% \begin{align*}
%     A_j(x) = \sum_{|\alpha|=N_p+1} \frac{1}{\alpha!} \big(\partial_y^{\alpha}[\psi_j(x-y)]\big)(v) (y-c)^{\alpha}.
% \end{align*}
Then, 
\begin{align*}
    \|A_j\|_{L^{\infty}} &\leq \sum_{|\alpha|=N_p+1} \frac{1}{\alpha!} c_n 2^{(j-1)(N_p+1+n)}\|\psi_0\|_{C^{N_p+1}}|y-c|^{N_p+1} \\
    &\leq c_{p,n}2^{(j-1)(N_p+1+n)}\|\psi_0\|_{C^{N_p+1}}|y-c|^{N_p+1}.
\end{align*}
Meanwhile, the cancellation conditions hold because of a change of variable and the fact that $\int x^{\beta}\partial^{\alpha}\psi_j=0$ for all $|\beta|\leq N_p$. Therefore, $A_j[c_{p,n} 2^{(j-1)(N_p+1+n)}\|\psi_0\|_{C^{N_p+1}}|y-c|^{N_p+1}|B(0,2^{-j+2})|^{p^{-1}}]^{-1}$ is an $(h^p,\infty)$ exact atom.
\end{itemize}\end{proof}

Finally, in the next proposition, we show some important estimates on the kernels $K_j$ when $T$ maps atoms into pre-molecules.

\begin{proposition} \label{submainthm}
    Let $0<p\leq 1\leq q\leq s\leq 2$ with $p\neq s$, and $0<\varepsilon<s(N_p+1-\gamma_p)$. Suppose $T$ is bounded on $L^2(\R^n)$. If there exists $C>0$ such that $T$ maps $(h^p, q')$ exact atoms to $\left(h^p,s,n\big(\frac{s}{p}-1\big)+\varepsilon, C\right)$ pre-molecules with respect to the same ball, then
\begin{align} \label{maingoal}
   \sum_{j=0}^{\infty}\bigg\|\bigg(\frac{|x-c|}{r}\bigg)^{n(\frac{1}{p}-\frac{1}{s})+\frac{\varepsilon}{s}}\|K_j(x,y)-P_{K_j,B}(x,y)\|_{L^{q}(B,dy/|B|)}\bigg\|_{L^s([B(c,4r)]^c,dx)}\leq C_{n,p,q,s} r^{n(s^{-1}-1)},
\end{align} 
where 
$$P_{K_j,B(c,r)}(x,y)=\begin{cases}
       \displaystyle \sum_{|\alpha|\leq N_p}M_{\alpha,j,c}(x)(y-c)^{\alpha}, &\quad \text{if }0<r<1, \\
    0 &\quad \text{otherwise},
    \end{cases}$$
and for some pre-molecule $M_{\alpha,j,c}$ centered at $c$. In particular, for fixed $B=B(c,r)$, $K-\sum_{j}P_{K_j,B}$ is a function on $\lbrace (x,y)\in \R^n\times \R^n: x\notin B(c,4r), \ y\in B(c,r)\rbrace$.
\end{proposition}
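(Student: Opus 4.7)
I would fix the ball $B=B(c,r)$ and first \emph{define} the polynomial $P_{K_j,B}$ by mimicking a Taylor expansion in $y$ around $c$: for $r<1$, set
\[
P_{K_j,B}(x,y):=\sum_{|\alpha|\le N_p}M_{\alpha,j,c}(x)(y-c)^{\alpha},\qquad M_{\alpha,j,c}(x):=\frac{(-1)^{|\alpha|}}{\alpha!}T\bigl((\partial^{\alpha}\psi_j)(\cdot-c)\bigr)(x),
\]
and $P_{K_j,B}\equiv 0$ if $r\ge 1$. Note $M_{0,j,c}(x)=K_j(x,c)$. By Lemma~\ref{atomlemma}(3), each $(\partial^{\alpha}\psi_j)(\cdot-c)$ is a fixed explicit constant (of order $2^{j(|\alpha|+n-n/p)}$ up to factors of $c$ and $\psi_0$) times an $(h^p,\infty)$ exact atom supported in $B(c,2^{-j+1})$, so the hypothesis on $T$ makes each $M_{\alpha,j,c}$ a computable constant multiple of a pre-molecule centered at $c$ with ball-radius $2^{-j+1}$. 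This realizes the structural description demanded by the statement.

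\textbf{Splitting the sum.} Let $j_0:=\lfloor\log_2(2/(3r))\rfloor$. I would estimate \eqref{maingoal} by splitting $\sum_j$ at $j_0$. For $j\le j_0$ (so $r\le 2^{-j+1}/3$), the key point is that on $y\in B(c,r)$ one has, by Taylor's theorem in $y$,
\[
K_j(x,y)-P_{K_j,B}(x,y)=T(A_j)(x),
\]
where $A_j$ is the function defined in Lemma~\ref{atomlemma}(4). That lemma identifies $A_j$ as the constant $c_{p,n}\,2^{j(N_p+1-\gamma_p)}|y-c|^{N_p+1}$ times an $(h^p,\infty)$ exact atom in $B(c,2^{-j+2})$, whence, by hypothesis, $T(A_j)$ is the same constant times a pre-molecule. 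I would then apply Minkowski's integral inequality (permitted since $q\le s$) to swap the $L^q_y(B,dy/|B|)$ and $L^s_x([B(c,4r)]^c)$ norms, split $[B(c,4r)]^c$ into $[B(c,4r)]^c\cap B(c,2^{-j+2})$ and $[B(c,2^{-j+2})]^c$, and apply $(M_1)$, $(M_2)$ respectively. A short computation using $n(1/p-1/s)+\varepsilon/s+n(1/s-1/p)=\varepsilon/s$ shows both pieces contribute the same bound
\[
C\,r^{-n(1/p-1/s)-\varepsilon/s}\cdot 2^{j(N_p+1-\gamma_p-\varepsilon/s)}\cdot r^{N_p+1}
\]
after the $y$-integration (since $\||y-c|^{N_p+1}\|_{L^q(B,dy/|B|)}\lesssim r^{N_p+1}$). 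The hypothesis $\varepsilon<s(N_p+1-\gamma_p)$ makes the exponent on $2^j$ positive, so the geometric sum over $j\le j_0$ is dominated by its top term $\simeq r^{-(N_p+1-\gamma_p-\varepsilon/s)}$, and multiplying all the $r$-powers gives exactly $C\,r^{n(1/s-1)}$.

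\textbf{Large $j$ regime.} For $j>j_0$ (so $2^{-j+1}<3r$), Taylor's approach is no longer efficient; instead I estimate $K_j$ and $P_{K_j,B}$ separately. For $K_j(x,y)=T(\psi_j(\cdot-y))(x)$, since $\psi_j(\cdot-y)$ is a computable multiple of an atom in $B(y,2^{-j+1})$ (Lemma~\ref{atomlemma}(2)), the hypothesis gives that $T(\psi_j(\cdot-y))$ is a pre-molecule there. The condition $2^{-j+1}<3r$ combined with $y\in B(c,r)$, $x\in [B(c,4r)]^c$ forces $B(y,2^{-j+1})\subset B(c,4r)$ and $|x-c|\simeq |x-y|$, so the $(M_2)$ bound yields
\[
\bigl\||x-c|^{n(1/p-1/s)+\varepsilon/s}K_j(x,y)\bigr\|_{L^s_x([B(c,4r)]^c)}\le C\,2^{-j(\gamma_p+\varepsilon/s)}
\]
uniformly in $y$, which sums geometrically to give the $r^{\gamma_p+\varepsilon/s}$ needed. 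For $P_{K_j,B}$, the pre-molecule property of $M_{\alpha,j,c}$ on $B(c,2^{-j+1})\subset B(c,4r)$ gives $\||x-c|^{n(1/p-1/s)+\varepsilon/s}M_{\alpha,j,c}\|_{L^s([B(c,4r)]^c)}\le C\,2^{j(|\alpha|-\gamma_p-\varepsilon/s)}$; integrating $(y-c)^{\alpha}$ against $dy/|B|$ produces $r^{|\alpha|}$, and summing the resulting $(r\,2^j)^{|\alpha|}2^{-j(\gamma_p+\varepsilon/s)}$ for $j>j_0$ — which converges since $|\alpha|\le N_p\le\gamma_p<\gamma_p+\varepsilon/s$ — again gives the $r^{\gamma_p+\varepsilon/s}$ factor, completing the matching with the right-hand side of \eqref{maingoal}.

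\textbf{Main obstacle.} The only genuinely delicate bookkeeping is to verify that the constants from Lemma~\ref{atomlemma}(3)--(4) (the factors $2^{j(|\alpha|+n-n/p)}$ and $2^{j(N_p+1-\gamma_p)}|y-c|^{N_p+1}$), once combined with the pre-molecule scaling $(2^{-j+1})^{\varepsilon/s}$, $(2^{-j+1})^{n(1/s-1/p)}$ and the weight $(|x-c|/r)^{n(1/p-1/s)+\varepsilon/s}$, conspire so that all $j$-exponents are simultaneously of the correct sign (positive for $j\le j_0$, negative for $j>j_0$) precisely under $\varepsilon<s(N_p+1-\gamma_p)$ and $|\alpha|\le N_p$. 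The final statement that $K-\sum_j P_{K_j,B}$ is a genuine function on $\{x\notin B(c,4r),\,y\in B(c,r)\}$ is then a corollary of the absolute convergence implicit in \eqref{maingoal} combined with the kernel identity of Lemma~\ref{lemma:kernel-decomposition}.
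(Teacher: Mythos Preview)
Your proposal is correct and follows essentially the same strategy as the paper's proof: define $P_{K_j,B}$ via the Taylor expansion in $y$ at $c$, split the sum over $j$ according to whether $2^{-j+1}$ is large or small relative to $3r$, use $K_j-P_{K_j,B}=T(A_j)$ together with Lemma~\ref{atomlemma}(4) and the pre-molecule bounds $(M_1)$--$(M_2)$ in the small-$j$ regime, and estimate $K_j$ and each $M_{\alpha,j,c}(y-c)^{\alpha}$ separately via $(M_2)$ in the large-$j$ regime. The only cosmetic differences are that the paper reduces to $s=q$ by monotonicity of the normalized $L^q$ norm (where you instead invoke Minkowski's inequality to swap the order of integration), and the paper treats the case $r\ge 1$ explicitly at the outset, whereas in your framework it falls out as the degenerate situation $j_0<0$ with $P_{K_j,B}\equiv 0$, so that only the large-$j$ estimate for $K_j$ is needed.
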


\begin{proof}

Let $0<p\leq 1$, $c\in \R^n$ and $r>0$. Without loss of generality, we consider the case $s=q$. By Lemma \ref{atomlemma}, since $\tilde{\psi}_j := C_{n,p} 2^{n(1-j)\left( 1-\frac{1}{p} \right)} \psi_j$ is an exact $(h^p,\infty)$ atom supported in $B(0,2^{1-j})$, we have by translation that $\tilde{\psi_j}(\cdot-y)$ is also an $(h^p,\infty)$ atom supported in $B(y,2^{1-j})$. Then, since by hypothesis $T$ maps atoms into pre-molecules,
for all $y \in \R^n$ we have
\begin{align} \label{Mole2}
    \int_{|x-y| \geq  2^{-j+1}} & |x-y|^{n\left(\frac{q}{p}-1\right)+\varepsilon} |T(\tilde{\psi_j}(\cdot-y))(x)|^qdx \leq 2^{(-j+1)\varepsilon} \nonumber \\
    \Rightarrow  & \int_{|x-y| \geq  2^{-j+1}}\bigg(\frac{|x-y|}{2^{-j+1}}\bigg)^{n(\frac{q}{p}-1)+\varepsilon}|T(\psi_j(\cdot-y))(x)|^{q}dx \leq  C_{n,p,q} 2^{(j-1)n(q-1)}.
\end{align}
With the same argument, one can also show
\begin{align} \label{Mole4}
   &\int_{|x-y|\geq  2^{-j+1}}\bigg(\frac{|x-y|}{2^{-j+1}}\bigg)^{n(\frac{q}{p}-1)+\varepsilon}\Big|T[(-1)^{|\alpha|}\partial^{\alpha}\psi_j(\cdot-c)](x)\Big|^{q}dx\leq C_{n,p,q}2^{(j-1)[q|\beta|+(q-1)n]},
\end{align}
% \begin{align} \label{Mole4}
%    &\int_{|x-y|\geq  2^{-j+1}}\bigg(\frac{|x-y|}{2^{-j+1}}\bigg)^{n(\frac{q}{p}-1)+\varepsilon}\Big|T(\partial^{\alpha}_y\psi_j(\cdot -y)|_{y=c})(x)\Big|^{q}dx\leq C_{n,p,q}2^{(j-1)[q|\beta|+(q-1)n]},
% \end{align}
and whenever $|y-c|\leq r<\frac{2^{-j+1}}{3}$ 
\begin{align} \label{Mole5}
    \int_{|x-c|\leq  2^{-j+2}}|T(A_j)(x)|^{q}dx\leq C_{n,p,q}  2^{(j-2)[q(N_p+1)+(q-1)n]}|y-c|^{q(N_p+1)};
\end{align}
\begin{align} \label{Mole6}
    \int_{|x-c|\geq 2^{-j+2}}\bigg(\frac{|x-c|}{2^{-j+2}}\bigg)^{n(\frac{q}{p}-1)+\varepsilon}|T(A_j)(x)|^{q}\leq C_{n,p,q}2^{(j-2)[q(N_p+1)+(q-1)n]}|y-c|^{q(N_p+1)},
\end{align}
where $A_j$ is defined in Lemma \ref{atomlemma}.
Due to the assumption and the constants explicitly stated in Lemma \ref{atomlemma}, the implicit constants in the upper bound of all the above estimates are independent of $j$ and $y$. 

We first consider the case $r\geq 1$ and recall that $P_{K_j,B}(x,y)=0$. Since $x \in B(c,4r)^c$ and $y\in B(c,r)$ implies $|x-c|\leq |x-y|+|y-c|\leq \frac{4}{3}|x-y|$, from \eqref{Mole2} we get
\begin{align*}
&\bigg\|\bigg(\frac{|x-c|}{r}\bigg)^{n(\frac{1}{p}-\frac{1}{q})+\frac{\varepsilon}{q}}\|K_j(x,y)\|_{L^{q}(B,dy/|B|)}\bigg\|_{L^q([B(c,4r)]^c,dx)}^q \\
% &= \int_{|x-c|\geq 4r}\bigg(\frac{|x-c|}{r}\bigg)^{n(\frac{q}{p}-1)+\varepsilon}\fint_{|y-c|\leq r} |K_j(x,y)|^q dydx \\
&= \fint_{|y-c|\leq r}\int_{|x-c|\geq 4r}\bigg(\frac{|x-c|}{r}\bigg)^{n(\frac{q}{p}-1)+\varepsilon} |K_j(x,y)|^q dxdy \\
&\leq  \fint_{|y-c|\leq r}\int_{|x-c|\geq 4r}\bigg(\frac{|x-y|+|y-c|}{r}\bigg)^{n(\frac{q}{p}-1)+\varepsilon} |K_j(x,y)|^q dxdy \\
&\leq C_{n,p,q, \varepsilon}' 2^{(-j+1)[n(\frac{q}{p}-1)+\varepsilon]} r^{-n(\frac{q}{p}-1)-\varepsilon} \fint_{|y-c|\leq r}\int_{|x-y|\geq 3r}\bigg(\frac{|x-y|}{2^{-j+1}}\bigg)^{n(\frac{q}{p}-1)+\varepsilon} |K_j(x,y)|^q dxdy \\
&\leq C_{n,p,q, \varepsilon}'' 2^{(-j+1)[n(\frac{q}{p}-1)+\varepsilon]} r^{-n(\frac{q}{p}-1)-\varepsilon}2^{(j-1)n(q-1)} \leq  C_{n,p,q, \varepsilon}'' 2^{(-j+1)(nq\left(1/p-1\right))}r^{n(1-q)},
\end{align*}
where in the last line we have used that $r^{nq(1-1/p)-\varepsilon}\leq 1$. After taking both sides the $q$-th root, we have summable coefficients over $j$ and the desired factor $r^{n(1/q-1)}$.

Consider now the case $r<1$. We show \eqref{maingoal} for 
$$ 
P_{K_j,B(c,r)}(x,y)=\sum_{0\leq |\alpha|\leq N_p} \frac{1}{\alpha!} T[(-1)^{|\alpha|}\partial^{\alpha}\psi_j(\cdot-c)](x) (y-c)^{\alpha}
$$ 
We first split the sum over $j$ into
\begin{align*}
&\sum_{j=0}^{\infty}\bigg\|\bigg(\frac{|x-c|}{r}\bigg)^{n(\frac{1}{p}-\frac{1}{q})+\frac{\varepsilon}{q}}\|K_j(x,y)-P_{K_j,B}(x,y)\|_{L^{q}(B,dy/|B|)}\bigg\|_{L^q([B(c,4r)]^c,dx)} \\
&= \bigg(\sum_{j: 2^{-j+1}\leq 3r}+\sum_{j:2^{-j+1}> 3r}\bigg) \\
& \quad\quad \times \bigg\|\bigg(\frac{|x-c|}{r}\bigg)^{n(\frac{1}{p}-\frac{1}{q})+\frac{\varepsilon}{q}}\|K_j(x,y)-P_{K_j,B}(x,y)\|_{L^{q}(B,dy/|B|)}\bigg\|_{L^q([B(c,4r)]^c,dx)}.
\end{align*}

When $j$ satisfies $2^{-j+1}\leq 3r$, by triangle inequality, it suffices to provide the desired estimate for $K_j(x,y)$ and $T[(-1)^{|\alpha|}\partial^{\alpha}\psi_j(\cdot-c)](x)$ separately. To deal with the estimate of $K_j(x,y)$, we use again that $|x-c|\leq |x-y|+|y-c|\leq \frac{4}{3}|x-y|$ and hence
\begin{align*}
    &\int_{|x-c|\geq 4r} \fint_{|y-c|\leq r} \bigg(\frac{|x-c|}{r}\bigg)^{n(\frac{q}{p}-1)+\varepsilon}|K_j(x,y)|^q dydx \\
    &\leq C_{n,p,q,\varepsilon}\fint_{|y-c|\leq r} \int_{|x-y|\geq 3r} \bigg(\frac{|x-y|}{r}\bigg)^{n(\frac{q}{p}-1)+\varepsilon}|K_j(x,y)|^q dxdy \\
    &\leq C_{n,p,q,\varepsilon}\bigg(\frac{2^{-j+1}}{r}\bigg)^{n(\frac{q}{p}-1)+\varepsilon} \fint_{|y-c|\leq r} \int_{|x-y|\geq 2^{-j+1}} \bigg(\frac{|x-y|}{r}\bigg)^{n(\frac{q}{p}-1)+\varepsilon}|K_j(x,y)|^q dxdy \\
    &\leq C_{n,p,q,\varepsilon}' \bigg(\frac{2^{-j+1}}{r}\bigg)^{n(\frac{q}{p}-1)+\varepsilon}  2^{(j-1)n(q-1)} \leq C_{n,p,q,\varepsilon}' (2^{-j+1})^{q\gamma_p+\varepsilon} r^{-n(\frac{q}{p}-1)-\varepsilon}.
\end{align*}
After taking $q$-th root and summing over $j$s, we have the upper bound $C_{n,p,q,\varepsilon}''r^{n(q^{-1}-1)}$.

To estimate the term $T[(-1)^{|\alpha|}\partial^{\alpha}\psi_j(\cdot-c)](x)(y-c)^{\alpha}$, we use \eqref{Mole4} and we obtain
\begin{align*}
    &\int_{|x-c|\geq 4r} \fint_{|y-c|\leq r} \bigg(\frac{|x-c|}{r}\bigg)^{n(\frac{q}{p}-1)+\varepsilon}\Big|T[(-1)^{|\alpha|}\partial^{\alpha}\psi_j(\cdot-c)](x)\Big|^q |(y-c)^{\alpha}|^q dydx \\
    &\leq C_{n,p,q,\varepsilon}\fint_{|y-c|\leq r} \int_{|x-y|\geq 3r} \bigg(\frac{|x-y|}{r}\bigg)^{n(\frac{q}{p}-1)+\varepsilon}\Big|T[(-1)^{|\alpha|}\partial^{\alpha}\psi_j(\cdot-c)](x)\Big|^q |(y-c)^{\alpha}|^q dxdy \\
    &\leq C_{n,p,q,\varepsilon}\bigg(\frac{2^{-j+1}}{r}\bigg)^{n(\frac{q}{p}-1)+\varepsilon} \\
    & \quad\quad \times \fint_{|y-c|\leq r} \int_{|x-y|\geq 2^{-j+1}} \bigg(\frac{|x-y|}{r}\bigg)^{n(\frac{q}{p}-1)+\varepsilon}\Big|T[(-1)^{|\alpha|}\partial^{\alpha}\psi_j(\cdot-c)](x)\Big|^q r^{q|\alpha|} dxdy \\
    &\leq C_{n,p,q,\varepsilon}' \bigg(\frac{2^{-j+1}}{r}\bigg)^{n(\frac{q}{p}-1)+\varepsilon}  2^{(j-1)[n(q-1)+q|\alpha|]} r^{q|\alpha|} \\
    &\leq C_{n,p,q,\varepsilon}' (2^{-j+1})^{q\gamma_p+\varepsilon} r^{-n(\frac{q}{p}-1)-\varepsilon}.
\end{align*}
After taking $q$-th root and summing over those $j$s, we have the upper bound $C_{n,p,q,\varepsilon}''r^{n(q^{-1}-1)}$ thanks to the fact that $\varepsilon >0$ and $\gamma_p\geq N_p$.

Lastly, we need to handle the sum where $j$ satisfies $2^{-j+1}>3r$. Note that for a fixed $r<1$, there are only finitely many $j$ in this case. We rewrite $K_j(x,y)-P_{K_j,B}(x,y)$ as
$$K_j(x,y)-P_{K_j,B}(x,y) = T(A_j)(x),$$
where $A_j$ is given by \eqref{Ax}. Then, from the estimates \eqref{Mole5} and \eqref{Mole6} we get
\begin{align*}
    &\int_{|x-c|\geq 4r} \fint_{|y-c|\leq r} \bigg(\frac{|x-c|}{r}\bigg)^{n\left(\frac{q}{p}-1\right)+\varepsilon}|T(A_j)(x)|^q dydx \\
    &\leq C_{n,p,q,\varepsilon}\fint_{|y-c|\leq r} \int_{|x-y|\geq 3r} \bigg(\frac{|x-y|}{r}\bigg)^{n\left(\frac{q}{p}-1\right)+\varepsilon}|T(A_j)(x)|^q dxdy \\
    &\leq C_{n,p,q,\varepsilon}\bigg(\fint_{|y-c|\leq r}\int_{3r\leq |x-y|\leq 2^{-j+2}} \\
    &\quad\quad\quad\quad\quad\quad +\fint_{|y-c|\leq r}\int_{|x-y|\geq 2^{-j+2}}\bigg)\bigg(\frac{|x-y|}{r}\bigg)^{n\left(\frac{q}{p}-1\right)+\varepsilon} |T(A_j)(x)|^qdydx\\
    &\leq C_{n,p,q,\varepsilon}'  \bigg(\frac{2^{-j+2}}{r}\bigg)^{n\left(\frac{q}{p}-1\right)+\varepsilon} 2^{(j-2)[q(N_p+1)+(q-1)n]}r^{q(N_p+1)} \\ 
    &\leq C_{n,p,q,\varepsilon}'' (2^{-j})^{q\gamma_p+\varepsilon-q(N_p+1)}r^{q(N_p+1)-n(\frac{q}{p}-1)-\varepsilon} .
\end{align*}
After taking $q$-th root and summing over $j$s, we have the upper bound $C_{n,p,q,\varepsilon}''r^{n(q^{-1}-1)}$ using the fact that $\varepsilon<q(N_p+1)-q\gamma_p$.
\end{proof}

We now are ready to prove the main theorem.

\begin{proof}[Proof of Theorem \ref{mainthm}]\ \\
     We may assume that $\varepsilon<s(N_p+1-\Gg_p)$ since an $(h^p,s,n(\frac{s}{p}-1)+\varepsilon, C)$ molecule is also an $(h^p,s,n(\frac{s}{p}-1)+\varepsilon', C)$ molecule for any $\varepsilon'<\varepsilon$. Moreover, since $T$ maps $(h^p, q')$ exact atoms to $(h^p,s,n(sp^{-1}-1)+\varepsilon,C)$ approximate molecules, we can express 
     $$
     P_{K,B}(x,y)=\sum_{j=0}^{\infty}P_{K_j,B}(x,y)=\sum_{|\alpha|\leq N_p}\frac{1}{\alpha!}F_{\alpha,c,r}(x)(y-c)^{\alpha}
     $$ 
     in $\mathcal{S}'(\R^n\times \R^n)$, where $F_{\alpha,c,r}=\sum_{j=0}^{\infty} \lambda_j M_j$, with $$\lambda_j = 2^{(j-1)|\alpha|}2^{(j-1)n}(2^n+1)\|\psi_0\|_{\mathcal{S}}|B(0,2^{-j+1})|^{\frac{1}{p}}$$ and $M_j(x) =\lambda_j^{-1} T[(-1)^{|\alpha|}\partial^{\alpha}\psi_j(\cdot-c)](x) $. 
    
    For this fixed ball $B=B(c,r)$, we will show that $F_{\alpha,c,r}(x)$
is a function when $x\notin B$ and is in $L^s(B^c)$. For simplicity, we write $\lambda=n(\frac{s}{p}-1)+\varepsilon$. By the definition of molecules, note that if $2^{-j+1}<r$,
$$\|M_j\|_{L^s(B^c)}\leq r^{-\lambda/s} \|M_j|\cdot-c|^{\lambda/s}\|_{L^s(B^c)}\leq C r^{-\frac{\lambda}{s}}2^{(-j+1)(\frac{\lambda}{s}-n(\frac{1}{p}-\frac{1}{s}))}$$
and if  $2^{-j+1}\geq r$ (only finitely many $j$s), then
\begin{align*}
    \|M_j\|_{L^s(B^c)}&\leq \|M_j\|_{L^s(B^c\cap B(c,2^{-j+1}))}+ \|M_j\|_{L^s([B(c,2^{-j+1})]^c)}\\
    &\leq C 2^{(j-1)n(\frac{1}{p}-\frac{1}{s})} + C  r^{-\frac{\lambda}{s}}2^{(-j+1)(\frac{\lambda}{s}-n(\frac{1}{p}-\frac{1}{s}))}.
\end{align*}
Then, since $\frac{\lambda}{s}=n(\frac{1}{p}-\frac{1}{s})+\frac{\varepsilon}{s}>n(\frac{1}{p}-\frac{1}{s})$, we have
\begin{align*}
    \sum_{j=0}^{\infty} \lambda_j \|M_j\|_{L^s(B^c)} &=\sum_{j: 2^{-j+1}\leq r} \lambda_j \|M_j\|_{L^s(B^c)}+\sum_{j: 2^{-j+1}\geq r} \lambda_j \|M_j\|_{L^s(B^c)} \\
    &\leq C_{n,p,s,\varepsilon}\sum_{j=0}^{\infty}  r^{-\frac{\lambda}{s}}2^{(-j+1)(\frac{\lambda}{s}-n(\frac{1}{p}-\frac{1}{s})+\gamma_p-|\alpha|)} + C_{n,p,s,\varepsilon}\sum_{j: 2^{-j+1}\geq r} 2^{(-j)n(\frac{1}{p}-\frac{1}{s})} \\
    &\leq C_{n,p,q,\varepsilon} (r^{-\frac{\lambda}{s}}+r^{-n\left(\frac{1}{p}-\frac{1}{s}\right)} )<\infty.
\end{align*}
Therefore, for fixed $B=B(c,r)$, $F_{\alpha,c,r}(x)$ is indeed a function when $|x-c|\geq r$, and therefore $P_{K,B}(x,y)$ is a function when $|x-c|\geq r$. Finally, the desired estimate follows from Proposition \ref{submainthm}.

If $N_p\neq \gamma_p$, note that 
    $$\sum_{j}\lambda_j^{p} \simeq_{n,p,s}\sum_{j} (2^{-j})^{p(n/p-n-|\alpha|)}\simeq_{n,p,s}\sum_{j} (2^{-j})^{p(\gamma_p-|\alpha|)}  <\infty$$
because $|\alpha|\leq N_p< \gamma_p$, and $M_j$ is an approximate molecule by Lemma \ref{atomlemma} and the hypothesis. Therefore, we can conclude that $F_{\alpha,c,r}(x)\in h^p(\R^n)$. 
\end{proof}

\begin{remark} \label{remark_end} \
\begin{enumerate}
    \item The assumption $q\leq s$ is to interchange the $L^q$ and $L^s$ norm in Theorem \ref{mainthm} and Proposition \ref{submainthm}. Indeed, the results in Theorem \ref{mainthm} and Proposition \ref{submainthm} are also true if we replace \eqref{Kernel1} by
\begin{align*} \label{Kernel2}
     \bigg\|\bigg\|\bigg(\frac{|x-c|}{r}\bigg)^{n(\frac{1}{p}-\frac{1}{s})+\frac{\varepsilon}{s}}\big(K(x,y)-P_{K,B}(x,y)\big)\bigg\|_{L^s([2B]^c,dx)}\bigg\|_{L^{q}(B,dy/|B|)}\leq C r^{n(s^{-1}-1)}
\end{align*} 
for $1\leq q,s\leq 2$. We need the order $\|\|\cdot\|_{L^q(B, dy/|B|)}\|_{L^s([2B]^c)}$ in the proof of Theorem \ref{mainthm0}, and in other places we consider $\|\|\cdot\|_{L^s([2B]^c)}\|_{L^q(B, dy/|B|)}$.
\item We can impose the weaker assumption on Proposition \ref{submainthm} that $T$ maps  $(h^p, q')$ atoms supported in $B(c,r)$ to $(h^p,s,n(\frac{s}{p}-1),\Ge)$ pre-molecules centered in $B(c,kr)$ for some $k\geq 1$ (independent of $c$ and $r$) by considering the cases $k2^{-j+1}\leq 3r$ and $k2^{-j+1}>3r$. The assumption of Theorem \ref{mainthm} can be weakened similarly.
\item The continuity of $T$ on $L^2(\R^n)$ is to avoid the cases in which the restriction of the extension of $T$ is not the same as $T$ originally defined (on $C^{\infty}_c(\R^n)$), see remark (i) after the proof of Theorem (1.16) in \cite{FHJW}; moreover, this assumption allows us to discuss $T$ acting on $h^p$ atoms.
\item In Theorem \ref{mainthm}, we are unable to show directly that $P_{K,B}(x,y)$ is the Taylor polynomial of $K(x,y)$ with respect to the $y$-variable because we may not always have $\partial^{\alpha}T(f)=T(\partial^{\alpha}f)$.
\item Theorem \ref{mainthm} also holds if we change the molecules in $h^p(\R^n)$ with the analogous one in $H^p(\R^n)$ (with $C=0$ in $(M_3)$). In this case, if $N_p\neq \gamma_p$, $F_{\alpha, c,r}\in H^p(\R^n)$ instead. In fact, in  Proposition \ref{submainthm}, nothing is used other than $(M_1)$ and $(M_2)$; and the molecules in $H^p$ and $h^p$ have the same size conditions $(M_1)$ and $(M_2)$. Therefore, Theorem \ref{mainthm} holds in this case.

\item We also remark that the results for Theorem \ref{mainthm0}, Theorem \ref{mainthm}, Corollary \ref{p=1case}, and the previous remark are true for Hardy spaces $H^p(\R^n)$ by removing the condition $P_{K,B}(x,y)=0$ if $r\geq 1$ and changing (ii) in Theorem \ref{mainthm0} to $T^*((x-c)^{\Ga})=0$ for all $|\alpha|\leq \lfloor n(\frac{1}{p}-1)\rfloor$. The proof of Theorem \ref{mainthm} is essentially the same by using homogeneous Littlewood-Paley type decomposition instead. In this case, there are infinitely many $j$ that satisfies $2^{-j+1}> 3r$ but $\sum_{2^{-j+1}>3r}(2^{j})^{q(N_p+1)-q\gamma_p-\varepsilon}$ is convergent. 
\end{enumerate}
\end{remark}

\begin{proof}[Proof of Corollary \ref{p=1case}]\ \\
First observe that $P_{K,B(c,r)}(x,y)=K(x,c)$ when $\frac{n}{n+1}<p\leq 1$ in the proof of Proposition \ref{submainthm}. If $r<1$ and $s>1$, 
\begin{align*}
    &\|\|K(x,y)-K(x,c)\|_{L^1(B,dy/|B|)}\|_{L^1([4B]^c,dx)} \\
    &\quad\quad\leq \|\|K(x,y)-K(x,c)\|_{L^q(B,dy/|B|)}\|_{L^1([4B]^c,dx)} \\
    &\quad\quad\leq  \|\|K(x,y)-K(x,c)\|_{L^q(B,dy/|B|)}|x-c|^{n(\frac{1}{p}-\frac{1}{s})+\frac{\varepsilon}{s}} |x-c|^{-[{n(\frac{1}{p}-\frac{1}{s})+\frac{\varepsilon}{s}}]}\|_{L^1([4B]^c,dx)} \\
    &\quad\quad\leq  \|\|K(x,y)-K(x,c)\|_{L^q(B,dy/|B|)}|x-c|^{n(\frac{1}{p}-\frac{1}{s})+\frac{\varepsilon}{s}} \|_{L^s([4B]^c,dx)} \\
    &\quad\quad\quad\quad\quad\quad \times \bigg(\int_{|x-c|\geq 4r}\frac{1}{|x-c|^{n(\frac{1}{p}-\frac{1}{s})s'+\frac{s'\varepsilon}{s}}} dx\bigg)^{\frac{1}{s'}} \\
    &\quad\quad\leq C r^{\frac{\varepsilon}{s}+n(\frac{1}{p}-1)} r^{-n(\frac{1}{p}-1)-\frac{\varepsilon}{s}}  =C'
\end{align*}
and the constant $C'$ is independent of $c$ and $r$. We have used the fact that $(\frac{1}{p}-\frac{1}{s})s'\geq 1$ if $p\leq 1< s$. The inequality holds in the case $s=1$ (and $p<1$) by considering $\sup_{|x-c|\geq 4r}$ instead of the integral.

If $r\geq 1$,  using \eqref{Mole2}, for all $1\leq s\leq 2$, 
\begin{align*}
    &\sum_{j=0}^{\infty}\int_{|x-c|\geq 4r} \bigg(\frac{|x-c|}{r}\bigg)^{n(\frac{s}{p}-1)+\varepsilon}|K_j(x,c)|^s dx    \\
    &\quad\quad\leq \sum_{j=0}^{\infty}\bigg(\frac{2^{-j+1}}{r}\bigg)^{n(\frac{s}{p}-1)+\varepsilon} \int_{|x-c|\geq 4r} \bigg(\frac{|x-c|}{2^{-j+1}}\bigg)^{n(\frac{s}{p}-1)+\varepsilon}|K_j(x,c)|^s dx \\
    &\quad\quad\leq C_{n,p,s}\sum_{j=0}^{\infty}\bigg(\frac{2^{-j+1}}{r}\bigg)^{n(\frac{s}{p}-1)+\varepsilon}2^{(j-1)n(s-1)} \leq C_{n,p,s}'
\end{align*}
Then a similar argument shows that  \begin{align*} 
    &\|\|K(x,y)-K(x,c)\|_{L^1(B,dy/|B|)}\|_{L^1([4B]^c,dx)} \\
    &\leq \|\|K(x,y)\|_{L^1(B,dy/|B|)}\|_{L^1([4B]^c,dx)}+\|\|K(x,c)\|_{L^1(B,dy/|B|)}\|_{L^1([4B]^c,dx)}\\
    &\leq C r^{\frac{\varepsilon}{s}+n(\frac{1}{p}-1)} r^{-n(\frac{1}{p}-1)-\frac{\varepsilon}{s}}+C  =C'.
\end{align*}

Therefore, by \cite{Suzuki0}*{Theorem 1}, we can conclude that $T$ can be extended to a linear operator that maps $L^1(\R^n)$ to $L^{1,\infty}(\R^n)$.
\end{proof}
\section*{Acknowledgment}
We would like to thank the referees for their valuable comments and for pointing out a mistake in Lemma \ref{atomlemma}. We also thank them for proposing a solution to fix such lemma.

\addcontentsline{toc}{section}{Bibliography}
\bibliographystyle{amsplain}
\bibliography{ref}

@phdthesis{GaliaThesis,
  author       = {Dafni, Galia}, 
  title        = {Hardy spaces on strongly pseudoconvex domains in ${C}^n$ and domains of finite type in ${C}^2$},
  school       = {Princeton {U}niversity},
  year         = 1993
}

@article{Goldberg1979,
author = "Goldberg, David",
doi = "10.1215/S0012-7094-79-04603-9",
fjournal = "Duke {M}athematical {J}ournal",
journal = "Duke {M}ath. {J}.",
month = "03",
number = "1",
pages = "27--42",
publisher = "Duke University Press",
title = "A local version of real {H}ardy spaces",
url = "https://doi.org/10.1215/S0012-7094-79-04603-9",
volume = "46",
year = "1979"
}

@article {Suzuki1,
    AUTHOR = {Suzuki, Soichiro},
     TITLE = {On a generalization of the {H}\"{o}rmander condition},
   JOURNAL = {Proc. Amer. Math. Soc. Ser. B},
  FJOURNAL = {Proceedings of the American Mathematical Society. Series B},
    VOLUME = {9},
      YEAR = {2022},
     PAGES = {286--296},
      ISSN = {2330-1511},
   MRCLASS = {42B20},
  MRNUMBER = {4436582},
MRREVIEWER = {Israel\ P.\ Rivera-R\'{\i}os},
       DOI = {10.1090/bproc/125},
       URL = {https://doi.org/10.1090/bproc/125},
}

@article {DLPV1,
    AUTHOR = {Dafni, Galia  and Lau, Chun Ho and Picon, Tiago and
              Vasconcelos, Claudio},
     TITLE = {Inhomogeneous cancellation conditions and
              {C}alder\'{o}n-{Z}ygmund type operators on {$h^p$}},
   JOURNAL = {Nonlinear Anal.},
  FJOURNAL = {Nonlinear Analysis. Theory, Methods \& Applications. An
              International Multidisciplinary Journal},
    VOLUME = {225},
      YEAR = {2022},
     PAGES = {Paper No. 113110, 22},
      ISSN = {0362-546X,1873-5215},
   MRCLASS = {42B30 (35S05 42B20)},
  MRNUMBER = {4475438},
MRREVIEWER = {Elijah\ Liflyand},
       DOI = {10.1016/j.na.2022.113110},
       URL = {https://doi.org/10.1016/j.na.2022.113110},
}

@article {DLPV2,
    AUTHOR = {Dafni, Galia  and Lau, Chun Ho and Picon, Tiago and
              Vasconcelos, Claudio},
     TITLE = {Necessary cancellation conditions for the boundedness of
              operators on local {H}ardy spaces},
   JOURNAL = {Potential Anal.},
  FJOURNAL = {Potential Analysis. An International Journal Devoted to the
              Interactions between Potential Theory, Probability Theory,
              Geometry and Functional Analysis},
    VOLUME = {61},
      YEAR = {2024},
    NUMBER = {1},
     PAGES = {1--11},
      ISSN = {0926-2601,1572-929X},
   MRCLASS = {42B30 (35S05 42B20)},
  MRNUMBER = {4758469},
       DOI = {10.1007/s11118-023-10100-w},
       URL = {https://doi.org/10.1007/s11118-023-10100-w},
}

@phdthesis{LThesis,
           month = {April},
           title = {Boundedness of Operators on Local Hardy Spaces and Periodic Solutions of Stochastic Partial Differential Equations with Regime-Switching},
          school = {Concordia University},
            year = {2023},
          author = {Lau, Chun  Ho},
             url = {https://spectrum.library.concordia.ca/id/eprint/992338/},
        abstract = {In the first part of the thesis, we discuss the boundedness of inhomogeneous singular integral operators suitable for local Hardy spaces as well as their commutators. First, we consider the equivalence of different localizations of a given convolution operator by giving
minimal conditions on the localizing functions; in the case of the Riesz transforms this results in equivalent characterizations of \$h{\^{ }}1\$. Then, we provide weaker integral conditions on the kernel of the operator and sufficient and necessary cancellation conditions to ensure the boundedness on local Hardy spaces for all values of p. Finally, we introduce a new class of atoms and use them to establish the boundedness of the commutators of inhomogeneous singular integral operators with bmo function.

In the second part of the thesis, we investigate periodic solutions of a class of stochastic partial differential equations driven by degenerate noises with regime-switching. First, we consider the existence and uniqueness of solutions to the equations. Then, we discuss the
existence and uniqueness of periodic measures for the equations. In particular, we establish the uniqueness of periodic measures by proving the strong Feller property and irreducibility of semigroups associated with the equations. Finally, we use the stochastic fractional porous
medium equation as an example to illustrate the main results.}
}

@article {PV,
    AUTHOR = {Picon, Tiago and Vasconcelos, Claudio},
     TITLE = {On the continuity of strongly singular
              {C}alder\'on-{Z}ygmund-type operators on {H}ardy spaces},
   JOURNAL = {Integral Equations Operator Theory},
  FJOURNAL = {Integral Equations and Operator Theory},
    VOLUME = {95},
      YEAR = {2023},
    NUMBER = {2},
     PAGES = {Paper No. 9, 29},
      ISSN = {0378-620X,1420-8989},
   MRCLASS = {42B20 (35S05 42B30)},
  MRNUMBER = {4565858},
MRREVIEWER = {Baode\ Li},
       DOI = {10.1007/s00020-023-02729-4},
       URL = {https://doi.org/10.1007/s00020-023-02729-4},
}

@article {DHZ,
    AUTHOR = {Ding, Wei and Han, YongSheng and Zhu, YuePing},
     TITLE = {Boundedness of singular integral operators on local {H}ardy
              spaces and dual spaces},
   JOURNAL = {Potential Anal.},
  FJOURNAL = {Potential Analysis. An International Journal Devoted to the
              Interactions between Potential Theory, Probability Theory,
              Geometry and Functional Analysis},
    VOLUME = {55},
      YEAR = {2021},
    NUMBER = {3},
     PAGES = {419--441},
      ISSN = {0926-2601,1572-929X},
   MRCLASS = {42B20 (42B25 42B30 42B35)},
  MRNUMBER = {4313085},
MRREVIEWER = {Sibei\ Yang},
       DOI = {10.1007/s11118-020-09863-3},
       URL = {https://doi.org/10.1007/s11118-020-09863-3},
}

@incollection {FHJW,
    AUTHOR = {Frazier, Michael and Han, YongSheng and Jawerth, Björn and Weiss, Guido},
     TITLE = {The {$T1$} theorem for {T}riebel-{L}izorkin spaces},
 BOOKTITLE = {Harmonic analysis and partial differential equations ({E}l
              {E}scorial, 1987)},
    SERIES = {Lecture Notes in Math.},
    VOLUME = {1384},
     PAGES = {168--181},
 PUBLISHER = {Springer, Berlin},
      YEAR = {1989},
      ISBN = {3-540-51460-0},
   MRCLASS = {46E35 (42B20)},
  MRNUMBER = {1013823},
MRREVIEWER = {H.\ Triebel},
       DOI = {10.1007/BFb0086801},
       URL = {https://doi.org/10.1007/BFb0086801},
}

@article {FeffStein,
    AUTHOR = {Fefferman, Charles and Stein, Elias M.},
     TITLE = {{$H\sp{p}$} spaces of several variables},
   JOURNAL = {Acta Math.},
  FJOURNAL = {Acta Mathematica},
    VOLUME = {129},
      YEAR = {1972},
    NUMBER = {3-4},
     PAGES = {137--193},
      ISSN = {0001-5962,1871-2509},
   MRCLASS = {42A40 (30A78 42A18 42A92)},
  MRNUMBER = {447953},
MRREVIEWER = {Alberto\ Torchinsky},
       DOI = {10.1007/BF02392215},
       URL = {https://doi.org/10.1007/BF02392215},
}

@article {Latter,
    AUTHOR = {Latter, Robert H.},
     TITLE = {A characterization of {$H\sp{p}({\bf R}\sp{n})$} in terms of
              atoms},
   JOURNAL = {Studia Math.},
  FJOURNAL = {Polska Akademia Nauk. Instytut Matematyczny. Studia
              Mathematica},
    VOLUME = {62},
      YEAR = {1978},
    NUMBER = {1},
     PAGES = {93--101},
      ISSN = {0039-3223,1730-6337},
   MRCLASS = {46E15 (30A78 46J15)},
  MRNUMBER = {482111},
MRREVIEWER = {M.\ Abel},
       DOI = {10.4064/sm-62-1-93-101},
       URL = {https://doi.org/10.4064/sm-62-1-93-101},
}

@article {Coifman,
    AUTHOR = {Coifman, Ronald R.},
     TITLE = {A real variable characterization of {$H\sp{p}$}},
   JOURNAL = {Studia Math.},
  FJOURNAL = {Polska Akademia Nauk. Instytut Matematyczny. Studia
              Mathematica},
    VOLUME = {51},
      YEAR = {1974},
     PAGES = {269--274},
      ISSN = {0039-3223,1730-6337},
   MRCLASS = {46E15},
  MRNUMBER = {358318},
MRREVIEWER = {J.\ A.\ van Casteren},
       DOI = {10.4064/sm-51-3-269-274},
       URL = {https://doi.org/10.4064/sm-51-3-269-274},
}

@incollection {TaibWeiss,
    AUTHOR = {Taibleson, Mitchell H. and Weiss, Guido},
     TITLE = {The molecular characterization of certain {H}ardy spaces},
 BOOKTITLE = {Representation theorems for {H}ardy spaces},
    SERIES = {Ast\'erisque},
    VOLUME = {77},
     PAGES = {67--149},
 PUBLISHER = {Soc. Math. France, Paris},
      YEAR = {1980},
   MRCLASS = {42B30 (46J15)},
  MRNUMBER = {604370},
MRREVIEWER = {J.\ A.\ Cima},
}

@article {YYD,
    AUTHOR = {Yang, Qixiang and Yan, Lixin and Deng, Donggao},
     TITLE = {On {H}\"ormander condition},
   JOURNAL = {Chinese Sci. Bull.},
  FJOURNAL = {Chinese Science Bulletin},
    VOLUME = {42},
      YEAR = {1997},
    NUMBER = {16},
     PAGES = {1341--1345},
      ISSN = {1001-6538,1861-9541},
   MRCLASS = {42B20 (46F10 47B38)},
  MRNUMBER = {1488508},
MRREVIEWER = {J.\ Horv\'ath},
       DOI = {10.1007/BF02882860},
       URL = {https://doi.org/10.1007/BF02882860},
}

@article {Komori,
    AUTHOR = {Komori, Yasuo},
     TITLE = {Calder\'on-{Z}ygmund operators on {$H^p(\bold R^n)$}},
   JOURNAL = {Sci. Math. Jpn.},
  FJOURNAL = {Scientiae Mathematicae Japonicae},
    VOLUME = {53},
      YEAR = {2001},
    NUMBER = {1},
     PAGES = {65--73},
      ISSN = {1346-0862,1346-0447},
   MRCLASS = {42B20 (47B38)},
  MRNUMBER = {1821600},
MRREVIEWER = {Shan\ Zhen\ Lu},
}

@article {Suzuki0,
    AUTHOR = {Suzuki, Soichiro},
     TITLE = {The {C}alder\'on-{Z}ygmund theorem with an {$L^1$} mean
              {H}\"ormander condition},
   JOURNAL = {J. Fourier Anal. Appl.},
  FJOURNAL = {The Journal of Fourier Analysis and Applications},
    VOLUME = {27},
      YEAR = {2021},
    NUMBER = {2},
     PAGES = {Paper No. 10, 11},
      ISSN = {1069-5869,1531-5851},
   MRCLASS = {42B20},
  MRNUMBER = {4226507},
MRREVIEWER = {Cody\ B.\ Stockdale},
       DOI = {10.1007/s00041-021-09810-9},
       URL = {https://doi.org/10.1007/s00041-021-09810-9},
}

@article {DafniLif,
    AUTHOR = {Dafni, Galia and Liflyand, Elijah},
     TITLE = {A local {H}ilbert transform, {H}ardy's inequality and
              molecular characterization of {G}oldberg's local {H}ardy
              space},
   JOURNAL = {Complex Anal. Synerg.},
  FJOURNAL = {Complex Analysis and its Synergies},
    VOLUME = {5},
      YEAR = {2019},
    NUMBER = {1},
     PAGES = {Paper No. 10, 9},
      ISSN = {2524-7581,2197-120X},
   MRCLASS = {42B35 (42B30)},
  MRNUMBER = {3936310},
MRREVIEWER = {Yan\ Meng},
       DOI = {10.1007/s40627-019-0036-2},
       URL = {https://doi.org/10.1007/s40627-019-0036-2},
}

@article {DafniYue,
    AUTHOR = {Dafni, Galia and Yue, Hong},
     TITLE = {Some characterizations of local bmo and {$h^1$} on metric
              measure spaces},
   JOURNAL = {Anal. Math. Phys.},
  FJOURNAL = {Analysis and Mathematical Physics},
    VOLUME = {2},
      YEAR = {2012},
    NUMBER = {3},
     PAGES = {285--318},
      ISSN = {1664-2368,1664-235X},
   MRCLASS = {42B35 (42B30)},
  MRNUMBER = {2958361},
MRREVIEWER = {Yasuo\ Komori-Furuya},
       DOI = {10.1007/s13324-012-0034-5},
       URL = {https://doi.org/10.1007/s13324-012-0034-5},
}

@book {SteinHA,
    AUTHOR = {Stein, Elias M.},
     TITLE = {Harmonic analysis: real-variable methods, orthogonality, and
              oscillatory integrals},
    SERIES = {Princeton Mathematical Series},
    VOLUME = {43},
      NOTE = {With the assistance of Timothy S. Murphy,
              Monographs in Harmonic Analysis, III},
 PUBLISHER = {Princeton University Press, Princeton, NJ},
      YEAR = {1993},
     PAGES = {xiv+695},
      ISBN = {0-691-03216-5},
   MRCLASS = {42-02 (35Sxx 43-02 47G30)},
  MRNUMBER = {1232192},
MRREVIEWER = {Michael\ Cowling},
}

\end{document}